 \let\footnote=\endnote
\newtheorem{thm}{Theorem}
\newtheorem{thm}{Theorem}[section]
 \theoremstyle{definition}
 \newtheorem{exple}[thm]{Example}
\theoremstyle{definition}
	\newtheorem{defi}[thm]{Definition}
 \theoremstyle{plain}
 \newtheorem{prop}[thm]{Proposition}
 \newtheorem{lem}[thm]{Lemma}
 \newtheorem{cor}[thm]{Corollary}
\theoremstyle{remark}
 \newtheorem{remark}[thm]{Remark}
	\newtheorem{rque}[thm]{Remark}	
\newcommand{\com}{\operatorname{Com}}
\newcommand{\perm}{\operatorname{Perm}}
\newcommand{\id}{\operatorname{id}}
\newcommand{\ME}{\operatorname{ME}}
\newcommand{\Bc}{\mathcal{B}}
\newcommand{\Nc}{\mathcal{N}}
\newcommand{\Pc}{\mathcal{P}}
\newcommand{\tPc}{\widetilde{\mathcal{P}}}
\newcommand{\Qc}{\mathcal{Q}}
\newcommand{\Tc}{\mathcal{T}}
\newcommand{\Kb}{\mathbb{K}}
\newcommand{\Nb}{\mathbb{N}}
\newcommand{\Sb}{\mathbb{S}}
\newcommand{\Zb}{\mathbb{Z}}
\newcommand{\Set}{\textsf{Set}}
\newcommand{\Vect}{\textsf{Vect}}
\newcommand{\Bij}{\textsf{Bij}}
\newcommand{\Ord}{\textsf{Ord}}
\newcommand{\Op}{\textsf{Op}}
\newcommand{\OpSh}{\textsf{OpSh}}
\title[Operads with compatible CL-shellable partition posets are PBW]{Operads with compatible CL-shellable partition posets admit a Poincaré--Birkhoff--Witt basis}
\author[J. Bellier-Mill\`{e}s]{Joan Bellier-Mill\`{e}s}
\address{B.-M.: IMT ; UMR5219\\ Université de Toulouse ; CNRS ; UPS, F-31400 Toulouse, France}
\author[B. Delcroix-Oger]{B\'{e}r\'{e}nice Delcroix-Oger}
\address{D.-O.: IRIF, UMR 8243, Université Paris 7, Sorbonne Paris Cit\'e}
\author[E. Hoffbeck]{Eric Hoffbeck}
\address{H.: Universit\'e Paris 13, Sorbonne Paris Cit\'e, LAGA, CNRS, UMR 7539, 99 avenue Jean-Baptiste Cl\'ement, F-93430, Villetaneuse, France}
\thanks{This work was partially supported by the LabEx CIMI, the grants ANR-13-BS02-0005-02 project CATHRE, ANR-14-CE25-0008-01 project SAT, ANR-16-CE40-0003 project ChroK}
 \date{}
\subjclass[2010]{ 18D50, 06A11 ; 16S15}
\begin{document}

\maketitle

\begin{abstract}
In 2007, Vallette built a bridge across posets and operads by proving that an operad is Koszul if and only if the associated partition posets are Cohen-Macaulay. Both notions of being Koszul and being Cohen--Macaulay admit different refinements: our goal here is to link two of these refinements. We more precisely prove that any (basic-set) operad whose associated posets admit isomorphism-compatible CL-shellings admits a Poincaré--Birkhoff--Witt basis. Furthermore, we give counter-examples to the converse.
\end{abstract}

\tableofcontents

\section*{Introduction}

The family of partitions of the sets $\{1, \ldots, n \}$ are partially ordered by refinement. For example, we have $\{ 1,\, 3\} \{ 2\} \{4\} \leq \{ 1,\, 3\} \{ 2,\, 4\}$. The smallest partition is given by the collection $\{1\} \cdots \{n \}$ and the largest partition is given by the single set $\{1, \ldots, n \}$.  This family of partially ordered sets (posets for short) can be built from the operad $\com$ encoding commutative algebras, and the homology of the simplicial set associated with this family of partition posets is isomorphic to the homology associated with the operad $\com$ (see \cite{bF04} and references within). 
More generally, Vallette shows in \cite{bV07} that a family of partition type posets can be associated with any (basic-set) operad in the category of sets, and he proves the equivalence between two homological properties: the Koszul property of the operad and the Cohen-Macaulay property of its associated partition posets. \\

The goal of this article is to show how to relate two refinements of these properties.
On the one hand, the Koszul property is usually hard to prove, and one useful criterion is the existence of a Poincaré--Birkhoff--Witt (PBW for short) basis, which is a basis of the operad satisfying some combinatorial properties. On the other hand, the Cohen--Macaulay property of a poset is implied by the shellability of this poset. And the shellability itself has many refinements, for instance EL-shellability, CL-shellability (which is equivalent to the existence of a recursive atom ordering) or the totally semi-modular property (see \cite{Wachs} for a general survey).\\

A natural question to ask is therefore to find a link between combinatorial conditions living in two very different worlds.
Unlike the Koszul property which is purely homological, a PBW basis is a combinatorial data subject to a compatibility condition relative to the operadic composition.
The different refinements of shellability are given by conditions on each poset of the family defined by Vallette in \cite{bV07}. 
In order to get a strong link between one of these notions with the existence of a PBW basis, we need to add a compatibility hypothesis.\\

Our main theorem states that if the posets associated with a basic-set operad are CL-shellable and satisfy a compatibility hypothesis, then the operad has a PBW basis. This provides a new method for proving that an operad admits a PBW basis, and therefore to show that an operad is Koszul. As in the work \cite{bV07}, these results are valid over any field $k$ or over $\Zb$.

Moreover, let us remark that we work in the context of weight graded operads, and it applies to weight graded associative algebras. It was not the case of the results of \cite{bV07} which apply to connected operads. We obtain the new notion of $A$-partitions associated with an associative algebra $A$ and a way to get a PBW basis by means of a CL-labelling satisfying the previous compatibility hypothesis.\\
 
The converse direction is more complicated. 
First we give the example of an algebra admitting a lexicographic PBW basis and whose associated posets are not isomorphism compatible CL-shellable.
Then we give another stronger example of an algebra such that any (lexicographic or not) PBW basis cannot be linked with an isomorphism compatible CL-shelling. 
The shellability of a poset is a weaker notion than the CL-shellability. It is therefore natural to ask whether the existence of a PBW basis could be related to the shellability of the poset associated to a basic-set operad with a compatibility hypothesis. 
Further work is needed to answer this question.

\subsection*{Outline of the paper}
In the first part of the paper, we make some recollections on operads. Then we describe (following \cite{bV07}) the poset associated to a shuffle operad and the compatibility condition we will require later on. We finally state and prove our main theorem and give a counter-example for the converse direction.

\subsection*{Acknowledgements}
The authors would like to thank Bruno Vallette for his careful reading of a first version of this article and his numerous comments. 
Moreover, we'd like to thank Rafael González, Josh Hallam and Yeison Quiceno for pointing out an error concerning a potential EL-labelling on the poset associated with the operad $\perm$ in an earlier version of the article. We have removed the problematic part from this version.

\subsection*{Notation}
Let $k$ be the ring $\Zb$ or any field of characteristic $0$. All $k$-modules are assumed to be projective. 
We mainly work with the categories $\Set$ (whose objects are sets and morphisms are all maps between them) 
and $\Vect$ (whose objects are $k$-vector spaces and morphisms are all $k$-linear maps between them).
We denote by $\Bij$ the category of non-empty finite sets (with bijections between them as morphisms) and
by $\Ord$ the category of non-empty finite ordered sets (with order-preserving bijections between them as morphisms).

\section{Operads and associated constructions}

This section is devoted to operads and related constructions. After some recollections on operads and shuffle operads, we present the link between PBW basis and the Koszulness of an operad. Finally, we give definitions related to the normalised bar construction.

\subsection{Recollections on operads}

When talking about operads, we follow essentially the notations of the book of Loday and Vallette \cite{LV12}. We are interested in operads in $\Set$, usually denoted by $\tPc$, and their linearised version $\Pc = k \langle \tPc \rangle$ in $\Vect$. The operads we consider are symmetric operads, but we will also need their shuffle versions, introduced by Dotsenko and Khoroshkin in \cite{DK}, following ideas of \cite{eH10}.

\subsubsection{Operads and shuffle operads}

Recall that an $\Sb$-module $\Pc=\{\Pc(n)\}_{n \in \Nb}$ in $\Vect$ is a collection of $k$-vector spaces $\Pc(n)$ endowed with an action of the symmetric group $\Sb_n$. On $\Sb$-modules, we define the monoidal product
\[
(\Pc \circ \Qc)(n)=\bigoplus_{k \geq 0} \Pc(k)\otimes_{\Sb_k} \left(\bigoplus_{i_1 + \cdots + I_k = n}\mathrm{Ind}_{\Sb_{i_1} \times \cdots \times \Sb_{i_k}}^{\Sb_n}\left(\Qc(i_1) \otimes \cdots \otimes \Qc(i_k)\right)\right).
\]
 A \emph{symmetric operad} is an $\Sb$-module $\Pc=\{\Pc(n)\}_{n \in \Nb}$ endowed with a composition map and a unit map which make it into a monoid in the category of $\Sb$-modules.

\begin{remark}
There is an analog definition for operads in the category $\Set$.
\end{remark}

We will also sometimes consider more generally an operad as a contravariant functor $\Pc$ from $\Bij$ to $\Vect$ satisfying similar axioms; this allows us to consider $\Pc(I)$ for $I$ any finite set, 
and we then recover $\Pc(n)$ of the first definition as $\Pc(\{1, \ldots, n \})$.

\begin{defi}
The category of $\mathbb{N}$-graded modules consists of contravariant functors from $\Ord$ to $\Vect$. When we restrict to the sets $\{ 1,\, \ldots ,\, n\}$, it coincides with the category of collections $\{ \Pc(n)\}_{n  \in \Nb}$, where each $\Pc(n)$ is a $k$-vector space. It can be equipped with a monoidal product called shuffle product and defined by
\[
(\Pc \circ_{sh} \Qc)(I)=\bigoplus_k \Pc(k)\otimes \left( \bigoplus_{f:I\to\{1, \ldots, k\}}\Qc(f^{-1}(1)) \otimes \cdots \otimes \Qc(f^{-1}(k))\right),
\]
where $f$ ranges over shuffle surjections, that is surjections such that we have $\min f^{-1} (i) < \min f^{-1} (j)$ whenever $i<j$.
A \emph{shuffle operad} can be then defined similarly as a $\mathbb{N}$-graded module $\Pc=\{\Pc(n)\}_{n \geq 0}$ with a composition map and a unit map which make it into a monoid in this monoidal category of $\mathbb{N}$-graded modules.
\end{defi}

In particular, given two elements $p$ and $q$ in a shuffle operad, 
the possible partial compositions are not just given by the $\circ_i$'s as for symmetric operads but by the $\circ_{i,w}$'s where $w$ is a shuffle of the entries of $p$ and $q$ 
such that the first element of $q$ comes immediately after the $(i-1)$-th element of $p$. Such a $w$ is then called a \emph{pointed shuffle}.

\begin{defi}
We denote by $\Op$ the category of symmetric operads (in $\Set$ or $\Vect$) and by $\OpSh$ the category of shuffle operads.
\end{defi}

There is a forgetful functor from $\Op$ to $\OpSh$, which we denote by $(-)_{sh}$. This functor forgets the action of the symmetric groups but it retains the whole structure of a symmetric operad. 
Recall that a symmetric operad $\Pc$ and its associated shuffle operad $\Pc_{sh}$ have the same underlying $k$-vector space.\\

We denote by $\Tc (E)$ the free operad in $\Set$ (respectively in $\Vect$) on the $\Sb$-module $E$ in $\Set$ (respectively in $\Vect$). We use the same notation in both categories, the category where $E$ sits will make clear where $\Tc (E)$ sits.

As it is explained for example in Appendix A of \cite{bF17} (see also Section 5.6 in \cite{LV12}), for an $\Sb$-module $E$ in $\Set$, the component of arity $n$ of the free operad $\Tc (E)$ 
can be identified with the set of equivalence classes of non-planar rooted trees, where vertices are labelled by elements of $E$ and leaves are labelled by the integers between $1$ and $n$
(the equivalence relation is equalizing the action of the symmetric group on $E$ and the induced action on the tree). The set $\Tc(E)$ is acted upon by the group $\Sb_n$.
For an $\Sb$-module $E$ in $\Vect$, a similar construction is possible, and labelled trees can be thought of tensor products of elements of $E$ arranged on a tree, sometimes called treewise tensors.\\

Similarly, the elements of the free shuffle operad $\Tc_{sh}(E)$ are given by rooted planar trees, with vertices labelled by elements of $E$ and leaves labelled by the integers between $1$ and the arity of the tree, in such a way that above each vertex, the inputs are in increasing order from left to right (if the input is a vertex, the associated integer is the smallest label of the leaves above it). We refer to Section 8.2 in \cite{LV12} for more details.

\subsubsection{Linear basis and weight grading}

For a $k$-linear basis $\Bc^E$ of an $\Sb$-module $E$ in $\Vect$, a basis denoted by $\Bc^{\Tc(E)}$ of $\Tc(E)$ is given by all shuffle trees labelled by elements of $\Bc^E$ (see \cite[3.1]{eH10}).
The operad $\Tc(E)_{sh}$ has the same basis, which is stable under shuffle composition.\\

When $E$ is an $\Sb$-module and $R \subset \Tc(E)$, we consider the quotient operad $\Tc (E)/(R)$ of the free operad on $E$ by the ideal generated by the set of relations $R$. 

The free operad is naturally endowed with a \emph{weight grading} counting the number of vertices in the tree representation.
When the $\Sb$-module $R$ is homogeneous for this weight grading, we obtain that the quotient operad $\Tc(E)/(R)$ is also weight graded. We assume that we are in this situation. Moreover, we suppose that $E$ is minimal in the sense that there exists no relation of weight $1$ in $R$.\\

We are mainly interested in the two following situations:
\begin{itemize}
\item
\emph{Connected} operads: $E(n)=0$ for $n=0$ and $n=1$.
\item
Associative algebras: $E(n)=0$ for $n \neq 1$.
\end{itemize}

In this article, we restrict ourselves to finite sets (or finite-dimensional spaces) of generators $E$. 

In the case of associative algebras, we denote by $T(E)$ the free associative algebra on $E$, which is also the tensor algebra on $E$. Elements in weight $d$ are given by $T(E)^{(d)} = E^{\otimes d}$.

\subsection{PBW bases and Koszulness}

Let $\Bc^E$  be a $k$-linear basis of an $\Sb$-module $E$ (in $\Vect$) and $\Bc^{\Tc(E)}$ is the associated (monomial) basis of $\Tc(E)$.

In the following, we assume that $\Bc^E$ is partially ordered, in a way compatible with the arity:
\begin{center}
$\mu < \nu$ if $\mu \in \Bc^E(k)$ and $\nu \in \Bc^E(l)$ with $k < l$.
\end{center}

Let us extend this order to $\Bc^{\Tc(E)}$ in a way compatible with the shuffle composition, that is:
for $\alpha,\, \alpha' \in \Bc^{\Tc(E)}(m)$ and $\beta,\, \beta' \in \Bc^{\Tc(E)}(n)$,
\begin{align}
\left\{ \begin{array}{l} \alpha \leq \alpha'\\ \beta \leq \beta' \end{array} \Rightarrow \forall i,\, \forall w \text{ pointed shuffle},\ \alpha \circ_{i,w} \beta \leq \alpha' \circ_{i,w} \beta'.\right.
\end{align}

Examples of suitable orders can be found in \cite{eH10} or \cite{DK}. 

\begin{defi}\label{defPBW}
A \emph{Poincaré-Birkhoff-Witt basis} (PBW basis for short) for $\Pc = \Tc(E)/(R)$ is a subset $\Bc^{\Pc}$ of $\Bc^{\Tc(E)}$ such that
\begin{itemize}
\item $1 \in \Bc^{\Pc}$,
\item $\Bc^E \subset \Bc^{\Pc}$,
\item $\Bc^{\Pc}$ represents a basis of the $\Kb$-module $\Pc$,
\end{itemize}
and satisfying the following conditions:
\begin{enumerate}
\item for $\alpha,\, \beta \in \Bc^{\Pc}$ and $w$ a pointed shuffle of the composition $\alpha \circ_i \beta$, either $ \alpha \circ_{i,w} \beta \in \Bc^{\Pc}$, or $\alpha \circ_{i,w} \beta = \sum_\gamma c_{\gamma} \gamma$, where the $\gamma$'s are in $\Bc^{\Pc}$ and satisfy  $\gamma < \alpha \circ_{i,w} \beta$ in the ordered basis of $\Tc(E)$; 

\item a treewise tensor $\alpha$ based on a tree $\tau$ 
is in $\Bc^{\Pc}$ if and only if for every internal edge $e$ of $\tau$, the restricted treewise tensor $\alpha_{|\tau_e}$ is in $\Bc^{\Pc}$. (The subtree $\tau_e$ of $\tau$ is the maximal subtree of $\tau$ having only the internal edge $e$.)
\end{enumerate}
\end{defi}

\begin{remark}
Note that in this definition, we do not need the order on $\Bc^E$ nor the order on $\Bc^{\Tc(E)}$ to be total.
Intuitively, it is enough to be able to write generating relations of $\Pc=\Tc(E)/(R)$ as one element being equal to a linear combination of smaller ones.
For instance it is the case when the relations in the presentation of the associated shuffle operad is given by a convergent rewriting system in the monomial basis of $\Tc(E)$
(which makes sense not only in the set-theoretical context but also in the linear context, see \cite{GHM} Section 3).
The normal forms for such a rewriting system are the minimal elements for the associated order and provide us with a PBW basis of the operad.
In such a case, the elements in $\Bc^E$ do not need to be comparable.
\end{remark}

\begin{thm}
An operad equipped with a partially ordered PBW basis is Koszul.
\end{thm}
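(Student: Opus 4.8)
The plan is to work in the shuffle category and to compare $\Pc$ with an associated monomial operad. The PBW conditions of Definition~\ref{defPBW} are phrased entirely in terms of the basis $\Bc^{\Tc(E)}$ and of pointed shuffles, so they are really a datum for the shuffle operad $\Pc_{sh} = \Tc_{sh}(E)/(R)$. Since a symmetric operad and its shuffle version share the same underlying weight-graded space, and the (co)bar construction computing Koszulness only uses the shuffle composition, it suffices to prove that $\Pc_{sh}$ is Koszul; this reduction is standard (following \cite{DK}). From now on I work with $\Pc_{sh}$.

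Next I would introduce the \emph{leading-term operad} $\Pc^{\mathrm{lt}} = \Tc_{sh}(E)/(R^{\mathrm{lt}})$, where $R^{\mathrm{lt}}$ is spanned by the maximal monomials (for the fixed order on $\Bc^{\Tc(E)}$) occurring in the elements of $R$. Condition~(1) of Definition~\ref{defPBW} says precisely that every pointed-shuffle composite of two elements of $\Bc^{\Pc}$ is either again in $\Bc^{\Pc}$ or rewrites as a combination of strictly smaller basis elements; iterating this, the normal monomials---the elements of $\Bc^{\Pc}$---span both $\Pc_{sh}$ and $\Pc^{\mathrm{lt}}$. As $\Bc^{\Pc}$ is assumed to be an actual basis of $\Pc$, the two operads have equal dimension in each arity and each weight, and $\Bc^{\Pc}$ is a basis of $\Pc^{\mathrm{lt}}$ as well.

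The order on $\Bc^{\Tc(E)}$ then induces an exhaustive filtration of $\Pc_{sh}$, which is compatible with shuffle composition by the displayed compatibility hypothesis; by the dimension count above its associated graded operad is isomorphic to $\Pc^{\mathrm{lt}}$. I would then invoke the standard comparison lemma for filtered quadratic operads: a convergent spectral sequence relates the bar homology of $\Pc_{sh}$ to that of $\mathrm{gr}\,\Pc_{sh}$, so Koszulness of $\mathrm{gr}\,\Pc_{sh} \cong \Pc^{\mathrm{lt}}$ forces the spectral sequence to degenerate and hence $\Pc_{sh}$ to be Koszul (see \cite{LV12}). It therefore remains to show that the monomial operad $\Pc^{\mathrm{lt}}$ is Koszul.

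The main obstacle is this last step. Here condition~(2) of Definition~\ref{defPBW} is essential: since membership in $\Bc^{\Pc}$ is detected edge-by-edge on each internal edge, the basis is determined by its weight-$2$ part, so $\Pc^{\mathrm{lt}}$ is genuinely quadratic and its Koszul dual is again monomial. One then computes the homology of the Koszul complex (equivalently, of the normalised bar construction) of $\Pc^{\mathrm{lt}}$ directly: because each relation is a single monomial, the bar differential applied to a tree monomial only contracts those internal edges whose two-vertex restriction is admissible, so the whole complex splits as a direct sum indexed by the top tree monomials, each summand being an elementary acyclic piece. This concentrates the bar homology in the diagonal weight, which is exactly the Koszulness of $\Pc^{\mathrm{lt}}$ and completes the argument.
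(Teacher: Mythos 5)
Your proposal is, in substance, a sketch of the classical result for a \emph{totally} ordered PBW basis (reduction to the shuffle operad, passage to the leading-term monomial operad, a filtration whose associated graded is that monomial operad, a spectral sequence, and a direct computation in the monomial case). That is exactly the argument of \cite[Theorem 2.5.1]{eH10}, which the paper simply invokes. The entire new content of the present statement is the word \emph{partially}, and your proof never engages with it. Two steps break down when the order on $\Bc^{\Tc(E)}$ is only partial. First, ``the maximal monomials occurring in the elements of $R$'' need not be well defined: an element of $R$ may contain several pairwise incomparable monomials, none of which is \emph{the} leading term. Second, and more seriously, a partial order does not ``induce an exhaustive filtration'' in the sense required by your spectral-sequence argument: the subspaces $F_{\leq\alpha}$ are not linearly ordered by inclusion, so there is no $\Zb$-indexed (or well-ordered) filtration to which the comparison lemma applies.

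The missing idea --- which is the whole content of the paper's proof --- is to extend the partial order to a total order and to observe that conditions (1) and (2) of Definition \ref{defPBW} persist under any such extension (since $\gamma<\delta$ in the partial order implies $\gamma<\delta$ in the extension). One then has to note a subtlety you would otherwise run into head-on: the extended total order need \emph{not} satisfy the displayed compatibility with pointed shuffle compositions, so you cannot justify that the filtration is compatible with composition ``by the displayed compatibility hypothesis,'' as you do. What saves the argument is that the construction of the filtration of subcomplexes and the degeneration argument only use the pointwise inequalities $\gamma<\alpha\circ_{i,w}\beta$ coming from the PBW conditions themselves, not global compatibility of the order with composition. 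Without this step your proof establishes a theorem whose hypotheses the present statement does not grant.
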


\begin{proof}
The proof of the usual criterion \cite[Theorem 2.5.1]{eH10} for a totally ordered PBW basis can still be applied when the PBW basis is only partially ordered.
Indeed, any partial order can be extended to a total order. Choosing such a total order, the conditions of a PBW basis will then remain true for this extended order. They allow us to construct a filtration of subcomplexes, even if the total order might not be compatible with pointed shuffle compositions.
The total order is only needed so that the stages of the filtration remain as in the usual case and so that the spectral argument can be applied.
\end{proof}

\subsection{Normalised reduced bar construction}\label{normalisedBarConstruction}

Let $\Pc$ be an operad. 
We denote by $\Nc(\Pc)$ the \emph{normalised bar construction} of $\Pc$ \cite[Section 4]{bF04} or \cite[3.1.2]{LV12},
which can be defined as the nerve of the simplicial bar construction. 
Following these references, we denote by $\Nc_l(\Pc)$ the $\Sb$-module represented by non-degenerate $l$-levelled trees. A representing element in the normalised bar construction writes $(\nu_1^1;\, \nu_1^2,\, \ldots ,\, \nu_{k_1}^2 ;\, \ldots ;\, \nu_1^l,\, \ldots,\, \nu_{k_l}^l)$, where each tuple $(\nu_1^j,\, \ldots ,\, \nu_{k_j}^j)$ is not only made up of identities. An example of a non-degenerate $3$-levelled tree is represented in Figure \ref{fig-NPel}.
\begin{figure}[h]
\[\xymatrix@R=10pt@C=10pt{ & 3\ar[d] &
1\ar[dr] & 4\ar[d] & 5\ar[dl] & 2\ar[dr] & & 6\ar[dl] \\
3\ar@{.}[r] & *+[F-,]{\nu_1^3}\ar@{.}[rr]\ar[d] & &*+[F-,]{\nu_2^3}\ar@{.}[rr]\ar[dr]\ar@{.}[rrr] & & & *+[F-,]{\nu_3^3}\ar[dll]\ar@{.}[r] &
 & \\
2\ar@{.}[r] & *+[F-,]{\nu_1^2}\ar@{.}[rrr]\ar[drr] & & & *+[F-,]{\nu_2^2}\ar@{.}[rrr]\ar[dl] & & & \\
1\ar@{.}[rrr] & & & *+[F-,]{\nu_1^1}\ar@{.}[rrrr]\ar[d] & & & & \\
& & & & & & & & \\ }\]
\caption{Non-degenerate $3$-levelled tree}\label{fig-NPel}\end{figure}
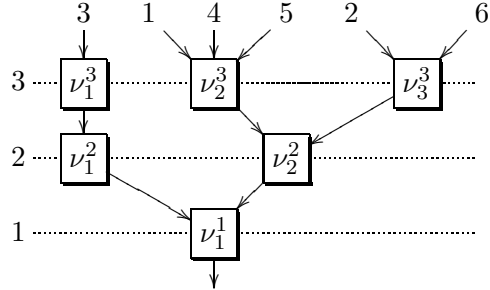

When the operad is weight-graded, we get that the normalised bar construction also is. Moreover, we assume that the operad $\Pc$ is presented by $\Tc(E)/(R)$ and is weight-graded by $\Pc^{(1)} = E$. In this case, we define the sub-$\Sb$-module $\Nc(E)$ of $\Nc(\Pc)$ by
\[
\Nc(E) \coloneqq \bigoplus_{d \geq 0} \Nc_{d}(\Pc)^{(d)}.
\]
Elements in $\Nc(E)$ can be described by levelled trees having exactly one generator different from the identity at each level. We write $\nu^1 \circ_{i_1} \nu^2 \circ_{i_2} \cdots \circ_{i_{l-1}} \nu^l \coloneqq (\cdots (\nu^1 \circ_{i_1} \nu^2) \circ_{i_2} \cdots ) \circ_{i_{l-1}} \nu^l$ for such an element. The notation $\nu \circ_i \mu$ means here that $\mu$ is put at the $i$th entry of $\nu$ just one level above.
For example, in the two cases we are interested in, we get the following descriptions.
\begin{itemize}
\item
When $\Pc$ is a connected binary operad, that is $E(n) = \{0\}$ for $n \neq 2$, we get that
\[
\Nc(E) \coloneqq I \oplus \bigoplus_{d \geq 1} \Nc_{d}(\Pc)(d+1),
\]
where $\Nc(\Pc)(d)$ is the $\Sb_d$-module of arity $d$ elements in $\Nc(\Pc)$.
\item
When $E$ is concentrated in arity $1$ (that is, when $\Pc$ is an associative algebra), $\Nc(\Pc)$ is nothing but the usual normalised bar construction of an associative algebra and we get $\Nc (E) \cong T(E)$.
\end{itemize}

\section{Posets}

We recall in this section some definitions on posets and on the CL-shellability of a poset. We also recall how to associate a partition type poset with an operad and provide examples. We finally present the definition of CL-labellings compatible with isomorphisms of subposets. 

\subsection{Shellability of a poset}

We recall in this subsection basic definitions of poset topology, following \cite{Wachs}.

\begin{defi}
A \emph{poset} $(\Pi,\, \leq)$ is a set $\Pi$ equipped with a partial order relation $\leq$.
\end{defi}

\begin{defi}
A poset $\Pi$ is said to be \emph{bounded} if it has a top element $\hat 1$ and a bottom element $\hat 0$.
\end{defi}

For elements $x < y$ in a poset $\Pi$, if there exists no $z$ such that $x < z < y$, then we say that $y$ \emph{covers} $x$. The covering relation is denoted by $x \prec y$.

\begin{defi} 
A \emph{maximal chain} is a chain which is not strictly contained in another chain. It is thus a chain $x_0 \prec \cdots \prec x_m$, with $x_0$ (resp. $x_m$) a minimal (resp. maximal) element in the poset. 
The \emph{length} of a chain $x_0<\cdots < x_m$ is $m$.
A poset $\Pi$ is \emph{pure} if all its maximal chains have the same length, which is then the \emph{height} $l(\Pi)$ of the poset. A pure and bounded poset is \emph{graded}.
\end{defi}

For a bounded poset $\Pi$, we consider the following definitions.

\begin{defi}
A pure poset is \emph{Cohen--Macaulay} if all its reduced homology groups, but the top ones, vanish.
\end{defi}

Different notions refine the notion of Cohen--Macaulayness. One of them is the notion of CL-shellability.

\begin{defi}
To every maximal chain $r$ of $[\hat 0,\, x]$ and to every interval $[x,\, y]$, we can associate the \emph{closed rooted interval}
\[ [x,\, y]_r \coloneqq \{ z\in r\} \cup \{ z \in [x,\, y]\}.\]
\end{defi}

We can now define the notion of \emph{chain-edge labelling}. The reason for using a poset $\Lambda$ to label chains will appear clearly in the definition of CL-labelling.

\begin{defi}
We denote by $\ME(\Pi)$ the set of pairs $(c,\, x \prec y)$ consisting of a maximal chain $c$ of $\Pi$ and an edge $x \prec y$ along that chain. A \emph{chain-edge labelling} of $\Pi$ is a map $\lambda : \ME(\Pi) \to \Lambda$, where $\Lambda$ is some poset, satisfying: if two maximal chains coincide along their bottom $d$ edges (for some integer $d$), then their labels also coincide along these edges.
\end{defi}

\begin{remark}\label{remark: restricted CL-labelling}
Let $[x,\, y]_r$ be a rooted interval of $\Pi$ and $\lambda : \ME(\Pi) \to \Lambda$ be a chain-edge labelling of $\Pi$. The map $\lambda$ induces a chain-edge labelling $\lambda_r^{x,\, y} : \ME([x,\, y]) \to \Lambda$. Moreover, we can associate with a maximal chain $c = (x_0 \prec \cdots \prec x_m)$ in $[x,\, y]$ a tuple of symbols in $\Lambda$ given by
\[
\left(\lambda_{x,\, y}^r ((c,\, x_0 \prec x_1)),\, \ldots,\, \lambda_{x,\, y}^r ((c,\, x_{m-1} \prec x_m))\right).
\]
\end{remark}

\begin{defi}
Let $\Pi$ be a poset. A \emph{chain-lexicographic labelling} (CL-labelling, for short) of $\Pi$ is a chain-edge labelling $\lambda : \ME(\Pi) \to \Lambda$ such that in each closed rooted interval $[x,\, y]_r$ of $\Pi$, there is a unique maximal chain whose associated labels in $\Lambda$ by $\lambda_r^{x,\, y}$ forms a strictly increasing chain in $\Lambda$. Moreover, this strictly increasing chain, seen as a tuple, precedes lexicographically all the tuples associated with other maximal chains of $[x,\, y]_r$. A poset that admits a CL-labelling is said to be \emph{CL-shellable}.
\end{defi}

It is a well-known result that any pure CL-shellable poset is Cohen--Macaulay, but the converse is not true.

\subsection{Poset associated with an operad}

Let $\tPc$ be a set operad. In \cite{bV07}, Vallette defines the notion of $\tPc$-partitions and he puts a partial order on the set of $\tPc$-partitions. He obtains what he calls the \emph{operadic partition poset} $\Pi_{\tPc}$. In order to recover a well-known order on some examples, we consider in this article the reverse order as the one given in \cite{bV07}.

We recall the definitions from \cite[3.1]{bV07}. Let $I \subset [n] \coloneqq \{ 1,\, \ldots ,\, n\}$ be a subset. 

\begin{defi}
A \emph{$\tPc$-partition} of $I$ is a collection $\lambda = \{ B_1,\, \ldots ,\, B_r\}$ with $B_k$ belongs to $\tPc(I_k)$ for $\{ I_1,\, \ldots ,\, I_r\}$ a partition of $I$. 
\end{defi}
An element $B \in \tPc(I)$ writes $\overline{\nu \times (x_1,\, \ldots ,\, x_n)}$. It is equal to the orbit of an element $\nu \times (x_1,\, \ldots ,\, x_n) \in \tPc_n \times \mathcal{I}$, where $\mathcal{I}$ is the set of ordered sequences of elements of $I$, each element appearing once, under the diagonal action of the symmetric group $\Sb_n$. 

We recall the partial order defined on $\tPc$-partitions:
\begin{defi}
The partial order on the $\tPc$-partitions of $I$ is given by
\begin{equation*}
 \lambda = \{B_1,\, \ldots ,\, B_r\} \leq \omega = \{C_1,\, \ldots ,\, C_s\},
\end{equation*}
where $B_k$ belongs to $\tPc(I_k)$ and $C_l$ to $\tPc(J_l)$ if for any $l\in \{1,\, \ldots ,\, s\}$, there exists a tuple $\{p_1^l,\, \ldots ,\, p_{s(l)}^l\} \subset \{1,\, \ldots ,\, r\}$ such that $\left\{I_{p_1^l},\, \ldots ,\, I_{p_{s(l)}^l}\right\}$ is a partition of $J_l$ and if there exists an element $\nu_l$ in $\tPc_{s(l)}$ such that
\[
C_l = \gamma \left(\nu_l \times \left(B_{p_1^l},\, \ldots ,\, B_{p_{s(l)}^l}\right)\right).
\]
\end{defi}

\begin{defi}
We say that the set operad $\tPc$ is \emph{basic-set} when for all $\nu_1$, \ldots ,\, $\nu_k$ in $\tilde \Pc$, the composition map $\tPc \to \tPc$, $\nu \mapsto \gamma (\nu , \nu_1 , \cdots , \nu_k)$ is injective.
\end{defi}

From now on, we assume that the operad $\tPc$ is basic-set. This condition is necessary to define \emph{isomorphisms of subposets} and to prove Theorem \ref{Theorem: order complex and normalised bar construction}. For these reasons, we need this assumption in Section \ref{Section: main theorem}.\\

As the considered operads are basic-set, the following proposition holds:
\begin{prop}
Let $\lambda$ and $\omega$ be $\tPc$-partitions as in the definition above, with $\lambda \leq \omega$.
For any $l \in \{1, \ldots, s\}$, the element $\nu_l$ is uniquely determined once the order of the $B_{p^l_k}$'s is fixed. 
\end{prop}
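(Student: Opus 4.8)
The plan is to exploit the basic-set hypothesis directly, since that is precisely the property designed to make composition injective. Recall the setup: we have $\lambda \leq \omega$, and for a fixed $l$ we are given that $\left\{I_{p_1^l},\, \ldots ,\, I_{p_{s(l)}^l}\right\}$ is a partition of $J_l$ and that $C_l = \gamma\left(\nu_l \times \left(B_{p_1^l},\, \ldots ,\, B_{p_{s(l)}^l}\right)\right)$ for some $\nu_l \in \tPc_{s(l)}$. The claim is that, once the order of the arguments $B_{p_1^l},\, \ldots ,\, B_{p_{s(l)}^l}$ is fixed, the element $\nu_l$ is forced.

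First I would reduce to a single composition. Suppose that both $\nu_l$ and $\nu_l'$ in $\tPc_{s(l)}$ satisfy
\[
\gamma\left(\nu_l \times \left(B_{p_1^l},\, \ldots ,\, B_{p_{s(l)}^l}\right)\right) = C_l = \gamma\left(\nu_l' \times \left(B_{p_1^l},\, \ldots ,\, B_{p_{s(l)}^l}\right)\right).
\]
The goal is to conclude $\nu_l = \nu_l'$. This is exactly the statement that the map $\nu \mapsto \gamma\left(\nu \times \left(B_{p_1^l},\, \ldots ,\, B_{p_{s(l)}^l}\right)\right)$ is injective on $\tPc_{s(l)}$. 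The key point I would emphasize is that the definition of basic-set asserts injectivity of $\nu \mapsto \gamma(\nu, \nu_1, \ldots, \nu_k)$ for \emph{any} fixed tuple $\nu_1, \ldots, \nu_k$ of operad elements; here the fixed tuple is precisely $\left(B_{p_1^l},\, \ldots ,\, B_{p_{s(l)}^l}\right)$.

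The one genuine subtlety I anticipate is a matching of conventions: the $B_{p_k^l}$ are elements of $\tPc(I_{p_k^l})$ indexed by arbitrary finite sets, whereas the basic-set definition is phrased for elements $\nu_i$ of the operad $\tPc$ (in the skeletal, arity-indexed sense). So the step that needs care is to view each $B_{p_k^l} \in \tPc\!\left(I_{p_k^l}\right)$ through the functoriality of $\tPc$ over $\Bij$: choosing an identification of each $I_{p_k^l}$ with $\{1, \ldots, |I_{p_k^l}|\}$ transports the $B_{p_k^l}$ to honest operadic elements, and the composite $\gamma\left(\nu \times \left(B_{p_1^l},\, \ldots\right)\right)$ is compatible with these relabellings. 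This is why the clause ``once the order of the $B_{p^l_k}$'s is fixed'' appears in the statement: fixing the order fixes the bijective relabelling, and hence pins down the precise instance of $\gamma$ whose injectivity we invoke. Once this bookkeeping is in place, the conclusion is immediate from the basic-set hypothesis, so I expect the main obstacle to be purely notational rather than mathematical.
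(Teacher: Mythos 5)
Your argument is correct and is exactly the route the paper takes: the paper offers no proof beyond the remark ``As the considered operads are basic-set, the following proposition holds,'' i.e.\ it invokes precisely the injectivity of $\nu \mapsto \gamma(\nu, \nu_1, \ldots, \nu_k)$ for a fixed tuple of arguments that you spell out. Your additional care about transporting the $B_{p_k^l} \in \tPc(I_{p_k^l})$ to skeletal operadic elements via $\Bij$-functoriality, and the observation that fixing the order of the arguments pins down the relevant instance of $\gamma$, only makes explicit what the paper leaves implicit.
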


It is now possible to consider a $\tPc$-partition measuring the difference between $\lambda$ and $\omega$, by removing the $\nu_l$'s which are identities. 
More precisely, we define the $\tPc$-partition $\delta_{\lambda}^{\omega}$ of the subset
\[
D_{\lambda}^{\omega} = \bigcup_{\substack{1 \leq l \leq s\\ \nu_l \neq \id}} \left\{ \min I_{p_1^{l}},\, \ldots ,\, \min I_{p_{s(l)}^l} \right\}
\]
of cardinal $r'\leq r$ by
\[
\delta_{\lambda}^{\omega} \coloneqq \{A_1,\, \ldots ,\, A_{s'}\} = \bigcup_{\substack{1 \leq l \leq s\\ \nu_l \neq \id}}\left\{ \overline{\nu_l \times (\min I_{p_1^{l}},\, \ldots ,\, \min I_{p_{t(l)}^l})}\right\}.
\] 

\begin{exple}
\begin{enumerate}
\item
When $\lambda = (B_1,\, \ldots ,\, B_r) \prec \omega = (C_1,\, \ldots ,\, C_s)$ is a covering relation, we get that the $\tPc$-partition $\delta_{\lambda}^{\omega}$ is $\{ A\}$ where $A = \overline{\nu \times (x_1,\, \ldots ,\, x_t)}$, for $x_j  = \min I_{k_j}$ for some $k_j$ and for $\nu \in \tPc(t)$ which cannot be written as a product of two non-trivial elements in $\tPc$.
\item
In the case where $\tPc$ is an associative algebra, the $\tPc$-partition $\delta_{\lambda}^{\omega}$ is always a set of cardinal $1$.
\end{enumerate}
\end{exple}

Again, we assume that the set operad $\tPc$ is presented by $\Tc(E)/(R)$ and is endowed with the weight-grading given by $\tPc^{(1)} = E$.

The following proposition follows from the fact that $\tPc^{(0)} = I$:

\begin{prop}
The partial order defined in \cite{bV07} is an anti-symmetric partial order. 
\end{prop}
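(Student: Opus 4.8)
The statement to establish is the anti-symmetry of the relation $\leq$ defined on $\tPc$-partitions; reflexivity is witnessed by taking every $\nu_l = \id$, and transitivity follows from the associativity of the operadic composition $\gamma$, so I would concentrate on anti-symmetry, which is where the content lies. The plan is to exploit the weight grading of $\tPc$ (the one given by $\tPc^{(1)} = E$) and to track how the \emph{total weight} of a $\tPc$-partition behaves along the order. Concretely, for $\lambda = \{B_1,\, \ldots,\, B_r\}$ I would set $|\lambda| \coloneqq \sum_k \mathrm{w}(B_k)$, where $\mathrm{w}(B_k)$ denotes the weight of the block $B_k \in \tPc(I_k)$, i.e. the number of generators in its tree representation.

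The key computation is that $\leq$ can only increase the total weight. Suppose $\lambda = \{B_1,\, \ldots,\, B_r\} \leq \omega = \{C_1,\, \ldots,\, C_s\}$, witnessed by elements $\nu_l \in \tPc_{s(l)}$ and index tuples $\{p_1^l,\, \ldots,\, p_{s(l)}^l\}$ with $C_l = \gamma(\nu_l \times (B_{p_1^l},\, \ldots,\, B_{p_{s(l)}^l}))$. Since the underlying set-partition $\{I_1,\, \ldots,\, I_r\}$ of $\lambda$ refines the set-partition $\{J_1,\, \ldots,\, J_s\}$ of $\omega$, every index $k \in \{1,\, \ldots,\, r\}$ appears in exactly one tuple, so these tuples partition $\{1,\, \ldots,\, r\}$. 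As the weight grading counts generators and is therefore additive under $\gamma$, I get $\mathrm{w}(C_l) = \mathrm{w}(\nu_l) + \sum_k \mathrm{w}(B_{p_k^l})$, and summing over $l$ yields $|\omega| = |\lambda| + \sum_l \mathrm{w}(\nu_l)$. Because all weights are non-negative, this gives $|\omega| \geq |\lambda|$, with equality precisely when every $\nu_l$ has weight zero.

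It then remains to conclude. If both $\lambda \leq \omega$ and $\omega \leq \lambda$ hold, then $|\lambda| \leq |\omega| \leq |\lambda|$, hence $|\lambda| = |\omega|$, which forces $\mathrm{w}(\nu_l) = 0$ for every $l$ in the witness of $\lambda \leq \omega$. This is exactly where the hypothesis $\tPc^{(0)} = I$ enters: the only weight-zero elements of $\tPc$ are the arity-one units. Consequently each $\nu_l$ is the identity and $s(l) = 1$, so $C_l = B_{p_1^l}$ and $J_l = I_{p_1^l}$; the assignment $l \mapsto p_1^l$ is then a bijection between the blocks of $\omega$ and those of $\lambda$, giving $\omega = \lambda$.

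There is no deep obstacle here; the only points demanding care are (i) checking that the index tuples genuinely partition $\{1,\, \ldots,\, r\}$, which rests on $\{J_l\}$ being a partition of $I$ refined by $\{I_k\}$, and (ii) the additivity of weight under $\gamma$, which is immediate from the vertex-count description of the weight grading. The genuinely essential ingredient is the fact $\tPc^{(0)} = I$: it is the linchpin that upgrades the numerical equality $|\lambda| = |\omega|$ into the rigidity $\omega = \lambda$, and without the minimality assumption guaranteeing $\tPc^{(0)} = I$ the relation could fail to be anti-symmetric.
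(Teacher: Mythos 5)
Your argument is correct and matches the paper's intent exactly: the paper offers no written proof beyond the remark that the proposition ``follows from the fact that $\tPc^{(0)} = I$,'' and your weight-counting argument is precisely the way that remark is meant to be fleshed out (additivity of the weight grading under $\gamma$ forces all witnessing $\nu_l$ to have weight zero, hence to be identities). Nothing to add.
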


An element $\nu$ in $\tPc$ induces canonically an element $\bar{\nu}\coloneqq\overline{\nu \times (1, ….n)}$ in $\Pi_{\tPc}$.

\begin{defi}
We define the subposets $\Pi_{\tPc}^{(d)}$, generated by elements in $\tPc$ of weight $d$, as follows:
\[
\Pi_{\tPc}^{(d)} \coloneqq \{ \lambda \in \Pi_{\tPc}\ ; \ \exists \nu \in \tPc^{(d)} \text{ such that } \lambda \leq \bar{\nu}\}.
\]
\end{defi}

\begin{remark}
When the operad $\tPc$ admits a binary quadratic presentation, the subposet $\Pi_{\tPc}^{(d)}$ coincides with what is called $\Pi_{\tPc}(d+1)$ in \cite{bV07}.
\end{remark}

Using the fact that the operad $\tPc$ is homogeneous, we get that the posets $\Pi_{\tPc}^{(d)}$ are pure with minimal elements of the form $\{ \overline{\id \times (1)},\, \ldots ,\, \overline{\id \times (k)}\}$ and maximal elements given by elements in $\tPc^{(d)}$.\\

 We can associate an analog partition poset to a shuffle operad in such a way that $\Pi_{\tPc} \cong \Pi_{\tPc_{sh}}$, where $(-)_{sh}$ is the forgetful functor from symmetric operads to shuffle operads. An element in $\Pi_{\tPc_{sh}}$ is a $\tPc_{sh}$-partition of $[n]$, that is a tuple $( B_1,\, \ldots ,\, B_r,\, w)$, such that, for all $j$, $B_j \in \tPc_{sh}(I_j)$, where $I_j = \{ i_1+\cdots +i_{j-1}+1,\, \ldots ,\, i_1+ \cdots + i_j\}$ for some $i_j$'s such that $i_1 + \cdots +i_r = n$, and $w$ is a $(i_1,\, \ldots ,\, i_r)$-unshuffle in $\Sb_n$ satisfying $w^{-1}(i_1+\cdots +i_j+1) < w^{-1}(i_1+\cdots +i_{j'}+1)$ when $0 \leq j < j' < r$. We have moreover $\max \Pi_{\tPc_{sh}}^{(d)} \cong \tPc^{(d)}$. In what follows, we will use the notation $\Pi_{\tPc}$ to denote the partition poset $\Pi_{\tPc} \cong \Pi_{\tPc_{sh}}$.\\

To every poset $\Pi$, we can associate an abstract simplicial complex $\Delta(\Pi)$ called the \emph{order complex} of $\Pi$. The vertices of $\Delta(\Pi)$ are the elements of $\Pi$ and the faces of $\Delta(\Pi)$ are the chains of $\Pi$. Theorem 7 of \cite{bV07} extends to the weight graded setting as follows.

\begin{thm}\label{Theorem: order complex and normalised bar construction}
Let $\tPc$ be a basic-set operad admitting a homogeneous presentation $\tPc = \Tc\left( E\right)/(R)$. We endow $\tPc$ with the  weight grading such that $\tPc^{(1)} = E$. Then, for any $d \in \Nb$, we have an isomorphism of presimplicial $\Sb$-modules between the order complex $k \left[\Delta_* \left(\Pi_{\tPc}^{(d)}\right) \right]$ and the normalised bar construction $\Nc_*(\Pc)^{(d)}$, where $\Pc$ is the linear operad associated with $\tPc$.
\end{thm}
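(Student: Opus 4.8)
The plan is to exhibit a basis-wise bijection between the two presimplicial $\Sb$-modules and then check that it intertwines the $\Sb$-actions and the face maps; I would follow the strategy of Vallette's proof of \cite[Theorem~7]{bV07}, adapting the bookkeeping to the weight grading. The starting point is the description of $\Nc_l(\Pc)^{(d)}$ as the free $k$-module on non-degenerate $l$-levelled trees of total weight $d$, that is on composites $\nu^1 \circ_{i_1} \cdots \circ_{i_{l-1}} \nu^l$ in which each level carries at least one non-identity generator and the generators total $d$. Following the endpoint conventions of \cite{bV07}, a basis element of $k\left[\Delta_l\left(\Pi_{\tPc}^{(d)}\right)\right]$ is a chain $\hat 0 = \lambda^0 < \lambda^1 < \cdots < \lambda^l = \bar\nu$ running from the (unique) discrete partition to a maximal element. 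To such a chain I associate the $l$-levelled tree whose vertices at the corresponding level are exactly the operations recorded by $\delta_{\lambda^{j-1}}^{\lambda^j}$; conversely, to a levelled tree I associate the chain whose $j$-th term $\lambda^j$ is the $\tPc$-partition obtained by composing the top $j$ levels of the tree, so that $\lambda^0 = \hat 0$ and $\lambda^l$ is the total composite $\bar\nu$. The Proposition asserting that each $\nu_l$ is uniquely determined, together with the basic-set hypothesis guaranteeing injectivity of operadic composition, is precisely what makes the reconstruction of a tree from a chain well-defined and shows the two assignments are mutually inverse.

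Next I would match the gradings and the non-degeneracy conditions. Each step $\lambda^{j-1} < \lambda^j$ contributes weight equal to that of $\delta_{\lambda^{j-1}}^{\lambda^j}$, and summing over $j$ gives the total weight $d$; this identifies the weight grading on the bar construction with the defining weight bound of $\Pi_{\tPc}^{(d)}$, and confines $\lambda^l$ to the maximal elements $\bar\nu$ with $\nu \in \tPc^{(d)}$. Moreover a level consisting only of identities corresponds exactly to a non-strict step $\lambda^{j-1} = \lambda^j$, so the \emph{non-degenerate} $l$-levelled trees are matched with the genuine length-$l$ chains, as required for a presimplicial (rather than simplicial) isomorphism. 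The associative-algebra case is a clean sanity check: there every $\delta_{\lambda}^{\omega}$ is a singleton, the posets are chains, and the correspondence collapses to the classical identification $\Nc(E) \cong T(E)$.

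It then remains to verify compatibility with the $\Sb$-action and with the face maps. The symmetric group acts by relabelling leaves on the tree side and by relabelling the ground set on the partition side, and the slicing construction visibly intertwines the two, so the bijection is $\Sb$-equivariant. For the presimplicial structure, the $i$-th face map on the order complex deletes $\lambda^i$ from a chain, whereas on the bar construction it composes the two levels adjacent to that slice. These agree because deleting $\lambda^i$ replaces the consecutive differences $\delta_{\lambda^{i-1}}^{\lambda^i}$ and $\delta_{\lambda^i}^{\lambda^{i+1}}$ by $\delta_{\lambda^{i-1}}^{\lambda^{i+1}}$, and the latter is exactly the operadic composite of the two corresponding levels; this is a direct consequence of the transitivity of the order together with the associativity of the operadic composition.

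The main obstacle is the well-definedness and bijectivity of this slicing correspondence in the full weight-graded, not-necessarily-connected generality, and in particular the verification that the reconstruction of a \emph{unique} levelled tree from a chain does not break down when $\tPc$ is not connected. I expect the delicate points to be the careful tracking of which generators sit at which level and of the minima $\min I_{p_k^l}$ indexing the blocks of $\delta_{\lambda}^{\omega}$, and the check that face-map compatibility holds on the nose and not merely up to the symmetric action; the basic-set hypothesis and the uniqueness Proposition are what let these computations go through without ambiguity.
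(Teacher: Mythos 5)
Your proposal is correct and takes essentially the same approach as the paper, which disposes of the theorem in one line by observing that Vallette's proof of \cite[Theorem 7]{bV07} carries over to the weight-graded setting because the slicing bijection preserves the weight. The details you supply --- the chain/levelled-tree correspondence, the matching of non-degeneracy with strictness of the chain, $\Sb$-equivariance, face-map compatibility, and the role of the basic-set hypothesis in making the reconstruction unique --- are precisely the content of Vallette's argument that the paper defers to.
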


\begin{proof}
The proof presented in \cite{bV07} works in our setting. It is enough to remark that the weight is preserved by the bijection.
\end{proof}

\begin{rque}\label{maximalChains}
When restricted to maximal chains, this isomorphism identifies maximal chains of $\Pi_{\tPc}^{(d)}$ with $\Nc\left( E\right)^{(d)}$ (see Section \ref{normalisedBarConstruction} for a definition of $\Nc\left( E\right)$). In the following, we will only use this isomorphism of $\Sb$-modules between maximals chains in $\Pi_{\Pc}^{(d)}$ and $\Nc\left( E\right)^{(d)}$.
\end{rque}

\subsection{Main examples}

We present here some basic examples.

\subsubsection{Algebras}

We consider an algebra on two generators $a$ and $b$ given by the presentation
\[
A \coloneqq T(a,\, b) / (ab = ba ).
\]
This algebra is basic-set and the presentation is quadratic (hence homogeneous). The associated partition posets $\Pi_A^{(d)}$, whose maximal elements are represented by elements in $T(a,\, b)$ made up of tensor products of $d$ elements in $\{ a,\, b\}$, are pure for all $d\geq 0$.

The posets $\Pi_A^{(d)}$ for $d = 1$ and $2$ are the following ones:\\
\begin{minipage}[t]{.46\linewidth}
$\Pi_A^{(1)}$ :\hspace{1cm}
\begin{tikzpicture}[line width=1pt, scale=0.5, baseline=(a.center)]
\coordinate (a) at(0,1);
\coordinate (b) at(1,0);
\draw (a)--(b);
\coordinate (c) at(2,1);
\draw (c)--(b);
\node [left =0.2cm of a] {$a$};
\node [below =0.1cm of b] {$1$};
\node [right =0.2cm of c] {$b$};
\foreach \x in {a,b, c}
\draw[fill=black] (\x) circle(0.1);
\end{tikzpicture}
\end{minipage} \hfill
\begin{minipage}[t]{.46\linewidth}
$\Pi_A^{(2)}$ :\hspace{1cm}
\begin{tikzpicture}[line width=1pt, scale=0.5, baseline=10mm]
\coordinate (a) at(2,0);
\coordinate (b) at(1,1);
\coordinate (c) at(3,1);
\coordinate (d) at(0,2);
\coordinate (e) at(2,2);
\coordinate (f) at(4,2);
\draw (a)--(d);
\draw (b)--(e);
\draw (c)--(e);
\draw (a)--(f);
\node [below =0.1cm of a] {$1$};
\node [left =0.2cm of b] {$a$};
\node [right =0.2cm of c] {$b$};
\node [left =0.2cm of d] {$a^2$};
\node [above =0.1cm of e] {$ba = ab$};
\node [right =0.2cm of f] {$b^2$};
\foreach \x in {a,b,c,d,e,f}
\draw[fill=black] (\x) circle(0.1);
\end{tikzpicture}
\end{minipage}

As a second example, we consider the algebra
\[
B \coloneqq T(x,\, y) / (xx-xy,\, yx-yy).
\]
This algebra is basic-set and the presentation is quadratic. For every $d\geq 0$, the poset $\Pi_B^{(d)}$ is pure. 
The posets $\Pi_B^{(d)}$ for $d = 1$ and $2$ are the following ones:\\
\begin{minipage}[t]{.46\linewidth}
$\Pi_B^{(1)}$ :\hspace{1cm}
\begin{tikzpicture}[line width=1pt, scale=0.5, baseline=(a.center)]
\coordinate (a) at(0,1);
\coordinate (b) at(1,0);
\draw (a)--(b);
\coordinate (c) at(2,1);
\draw (c)--(b);
\node [left =0.2cm of a] {$x$};
\node [below =0.1cm of b] {$1$};
\node [right =0.2cm of c] {$y$};
\foreach \x in {a,b, c}
\draw[fill=black] (\x) circle(0.1);
\end{tikzpicture}
\end{minipage} \hfill
\begin{minipage}[t]{.46\linewidth}
$\Pi_B^{(2)}$ :\hspace{1cm}
\begin{tikzpicture}[line width=1pt, scale=0.5, baseline=13mm]
\coordinate (a) at(1,0);
\coordinate (b) at(0,1);
\coordinate (c) at(2,1);
\coordinate (d) at(0,2.5);
\coordinate (e) at(2,2.5);
\draw (a)--(b);
\draw (a)--(c);
\draw (b)--(d);
\draw (b)--(e);
\draw (c)--(d);
\draw (c)--(e);
\node [below =0.1cm of a] {$1$};
\node [left =0.2cm of b] {$x$};
\node [right =0.2cm of c] {$y$};
\node [above left=0.1cm and -0.3cm of d] {$xx=xy$};
\node [above right =0.1cm and -0.3cm of e] {$yx=yy$};
\foreach \x in {a,b,c,d,e}
\draw[fill=black] (\x) circle(0.1);
\end{tikzpicture}
\end{minipage}

\begin{rque}
Using the fact that the algebra/operad is basic-set, we will sometimes label edges rather than vertices by elements of the algebra/operad. The previous example will be drawn as:
\begin{center}
\begin{tikzpicture}[line width=1pt, scale=0.7, baseline=13mm]
\coordinate (a) at(1,0);
\coordinate (b) at(0,1);
\coordinate (c) at(2,1);
\coordinate (d) at(0,2.5);
\coordinate (e) at(2,2.5);
\draw (a)edge node[below left]{$x$}(b);
\draw (a)edge node[below right]{$y$}(c);
\draw (b)edge node[left]{$x$}(d);
\draw (b)edge node[above=-0.07cm, pos=0.65]{$y$}(e);
\draw (c)edge node[above, pos=0.65]{$x$}(d);
\draw (c)edge node[right]{$y$}(e);
\foreach \x in {a,b,c,d,e}
\draw[fill=black] (\x) circle(0.1);
\end{tikzpicture}
\end{center}
So that the reading of a chain from bottom to top gives an element in the algebra read from right to left.
\end{rque}

\subsubsection{Operads} \label{expleOperad}

Let us consider the commutative operad $\com$. A presentation of this operad as a symmetric operad is
\[
\com= \Tc(\mu) / \left((\mu \circ (\operatorname{id} , \mu) - \mu \circ (\mu , \operatorname{id})  ). \Sb_3\right),
\]
with $\mu$ in arity $2$ invariant under the action of the symmetric group $\Sb_2$.
It is a basic-set operad, with a unique element denoted by $e_n$ in every positive arity $n$. 
The composition of operads is given by $\gamma(e_p , e_{k_1} ,  \ldots , e_{k_p}) = e_{k_1+\cdots+k_p}$. 
A presentation of $\com_{sh}$ is then
\[
\com_{sh}= \Tc_{sh}(\mu) /  (\mu \circ (\operatorname{id} \otimes \mu) - \mu \circ (\mu \otimes \operatorname{id}),\ \mu \circ (\operatorname{id} \otimes \mu) -  \mu \circ_{1,(23)} \mu ),
\]
where we denote again by $\mu$ the image of $\mu$ by the functor $(-)_{sh}$. 
The associated partition posets are the usual partition posets, 
ordered by refinement, i.e. two partitions on $n$ elements $A$ and 
$B$ satisfy $A \leq B$ if and only if parts of $A$ are included in 
parts of $B$. 
Figure \ref{partitionPosetCom} represents the partition poset in 
arity $3$ associated with the operad $\com$.

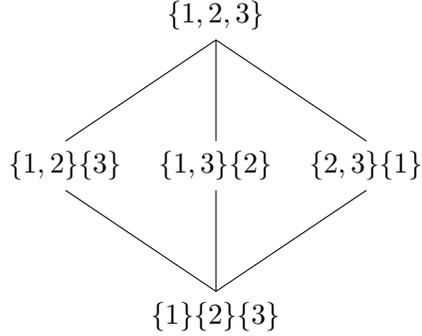
\begin{figure}[h]
\begin{tikzpicture}[scale=0.5]
\def\e{4cm}
\def\z{4cm}
\node (min2) at (0,0) {$\{1,2,3\}$};
\node (a) at (0,-4) {$\{1,3\}\{2\}$};
\node (b) at (-\z,-4) {$\{1,2\}\{3\}$};
\node (c) at (\z,-4) {$\{2,3\}\{1\}$};

\node (max) at (0,-8) {$\{1\}\{2\}\{3\}$};

\draw (min2.south)edge (a.north);
\draw (min2.south)edge (b.north);
\draw (min2.south)edge (c.north);

\draw (a.south)edge (max.north);
\draw (b.south)edge (max.north);
\draw (c.south)edge (max.north);
\end{tikzpicture}
\caption{Partition poset in arity $3$ associated with the operad $\com$}\label{partitionPosetCom}
\end{figure}

\bigskip

Let us now consider the permutative operad $\perm$, introduced by Chapoton in \cite{ChaPerm}. A presentation of this operad as a symmetric operad is 
\[
\perm= \Tc(\mu.\Sb_2) /
\left( \left(\mu \circ_1 \mu - \mu \circ_2 \mu,\ \mu \circ_1 \mu - (\mu \circ_2 \mu) . (23)\right).\Sb_3\right),
\]
 where $\mu .\Sb_2$ is in arity $2$ and endowed with the regular action of $\Sb_2$. 
It is a basic-set operad whose operations in arity $n$ are the pointed versions of the set $\{1, \ldots, n\}$.
Denoting by $e_{n,k}$, with $1 \leq k \leq n$ the elements of $\perm(n)$, the operadic composition is given by $\gamma(e_{p, a} \otimes e_{k_1, b_1} \otimes  \cdots \otimes e_{k_p, b_p}) = e_{k_1+\cdots+k_p, k_1+ \cdots +k_{a-1}+b_a}$. 
A presentation of $\perm_{sh}$ is then 
\[
\perm_{sh}= \Tc_{sh}(\mu, \mu^{\tau}) / (R_{sh}),
\]
where
\begin{align*}
R_{sh} \coloneqq & (\mu \circ_1 \mu - \mu \circ_{1,(23)} \mu,\ \mu \circ_1 \mu - \mu \circ_2 \mu,\ \mu \circ_1 \mu - \mu \circ_2 \mu^{\tau},\\
& \mu \circ_{1,(23)} \mu^{\tau} - \mu^{\tau} \circ_2 \mu^{\tau},\ \mu \circ_{1,(23)} \mu^{\tau} - \mu^{\tau} \circ_1 \mu,\ \mu \circ_{1,(23)} \mu^{\tau} - \mu^{\tau} \circ_1 \mu^{\tau},\\
& \mu^{\tau} \circ_2 \mu - \mu \circ_1 \mu^{\tau},\ \mu^{\tau} \circ_2 \mu - \mu^{\tau} \circ_{1,(23)} \mu^{\tau},\ \mu^{\tau} \circ_2 \mu - \mu^{\tau} \circ_{1,(23)} \mu),
\end{align*}
and $\mu^{\tau}$ is the image of $\mu.(12)$ by the functor $(-)_{sh}$.

The associated partition posets are the pointed partition posets studied by Chapoton and Vallette in \cite{ChVal}. The underlying partitions are partitions of $\{1, \ldots, n\}$, with a distinguished element in every part and the order is given by refinement, as in the usual partition case, with a compatibility of distinguished elements (if an element $x$ is distinguished in a permutation $B$, it must also be in $A$ finer than $B$ to have $A \leq B$). Figure \ref{partitionPosetPerm} represents the partition posets in arity $3$ associated with the operad $\perm$.

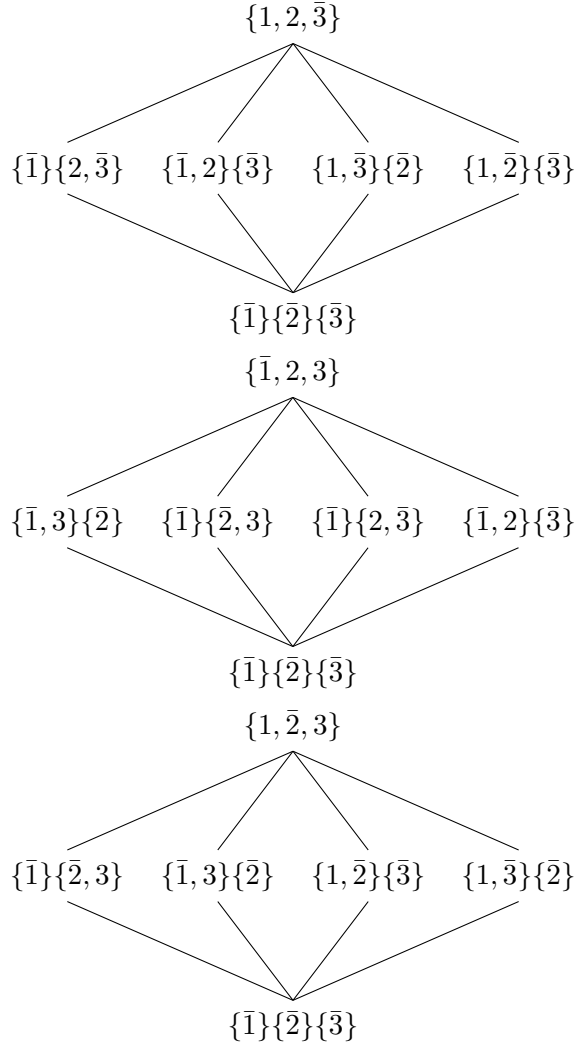
\begin{figure}[h!]
\begin{tikzpicture}[scale=0.5]
\def\e{4cm}
\def\z{4cm}
\node (min3) at (4,0) {$\{1,2,\bar{3}\}$};
\node (m2) at (-\z/2,-4) {$\{\bar{1}\}\{2,\bar{3}\}$};
\node (d) at ([xshift=\z]m2) {$\{\bar{1},2\}\{\bar{3}\}$};
\node (c) at ([xshift=\z]d) {$\{1,\bar{3}\}\{\bar{2}\}$};
\node (e) at ([xshift=\z]c) {$\{1,\bar{2}\}\{\bar{3}\}$};
\node (max) at (4,-8) {$\{\bar{1}\}\{\bar{2}\}\{\bar{3}\}$};

\draw (min3.south)edge (e.north);
\draw (min3.south)edge (m2.north);
\draw (min3.south)edge (d.north);
\draw (min3.south)edge (c.north);

\draw (c.south)edge (max.north);
\draw (d.south)edge (max.north);
\draw (e.south)edge (max.north);
\draw (m2.south)edge (max.north);
\end{tikzpicture}
\begin{tikzpicture}[scale=0.5]
\def\e{4cm}
\def\z{4cm}
\node (min1) at (-4,0) {$\{\bar{1},2,3\}$};
\node (m2) at (-\z/2,-4) {$\{\bar{1}\}\{2,\bar{3}\}$};
\node (a) at ([xshift=-\z]m2) {$\{\bar{1}\}\{\bar{2},3\}$};
\node (b) at ([xshift=-\z]a) {$\{\bar{1},3\}\{\bar{2}\}$};
\node (d) at ([xshift=\z]m2) {$\{\bar{1},2\}\{\bar{3}\}$};
\node (max) at (-4,-8) {$\{\bar{1}\}\{\bar{2}\}\{\bar{3}\}$};

\draw (min1.south)edge (m2.north);
\draw (min1.south)edge (a.north);
\draw (min1.south)edge (b.north);
\draw (min1.south)edge (d.north);

\draw (a.south)edge (max.north);
\draw (b.south)edge (max.north);
\draw (d.south)edge (max.north);
\draw (m2.south)edge (max.north);
\end{tikzpicture}
\begin{tikzpicture}[scale=0.5]
\def\e{4cm}
\def\z{4cm}
\node (min2) at (0,0) {$\{1,\bar{2},3\}$};
\node (m2) at (-\z/2,-4) {$\{\bar{1},3\}\{\bar{2}\}$};
\node (a) at ([xshift=-\z]m2) {$\{\bar{1}\}\{\bar{2},3\}$};
\node (d) at ([xshift=\z]m2) {$\{1,\bar{2}\}\{\bar{3}\}$};
\node (c) at ([xshift=\z]d) {$\{1,\bar{3}\}\{\bar{2}\}$};
\node (max) at (0,-8) {$\{\bar{1}\}\{\bar{2}\}\{\bar{3}\}$};

\draw (min2.south)edge (d.north);
\draw (min2.south)edge (a.north);
\draw (min2.south)edge (m2.north);
\draw (min2.south)edge (c.north);

\draw (a.south)edge (max.north);
\draw (c.south)edge (max.north);
\draw (d.south)edge (max.north);
\draw (m2.south)edge (max.north);
\end{tikzpicture}
\caption{Partition posets in arity $3$ associated with the operad $\perm$}\label{partitionPosetPerm}
\end{figure}

\subsection{CL-labelling and isomorphisms of subposets}

For $I$ and $J$ finite sets, any bijective map $f : I \to J$ induces a bijection 
\[ \tPc(f) : \tPc (I) \to \tPc (J),\ \overline{\nu_t \times (x_1,\, \ldots ,\, x_t)} \mapsto \overline{\nu_t \times (f(x_1),\, \ldots ,\, f(x_t))}. \]

\begin{defi}
Let $\Pi_1$ and $\Pi_2$ be two (pure) interval subposets of the partition posets $\Pi_{\tPc}^{(d_1)}$ and $\Pi_{\tPc}^{(d_2)}$ respectively. 
For $i \in \{ 1,\, 2\}$, let
\[
E_i \coloneqq \bigcup_{\substack{\lambda,\, \omega \in \Pi_i\\ \lambda \leq \omega}} D_{\lambda}^{\omega}.
\]
\begin{enumerate}
\item
We say that the two subposets $\Pi_1$ and $\Pi_2$ are \emph{isomorphic} if there is an isomorphism $g : \Pi_1 \to \Pi_2$ of posets and an increasing (for the usual total order on $\Nb$) bijection $f : E_1 \to E_2$ such that for every edge edge $\lambda \prec \omega$, we have
\[
g(\delta_{\lambda}^{\omega}) = \tPc (f)(\delta_{\lambda}^{\omega}).
\]
\item
Assume that every poset $\Pi_{\tPc}^{(d)}$ is endowed with a CL-labelling. We say that the CL-labellings on $\left\{ \Pi_{\tPc}^{(d)}\right\}_d$ are \emph{compatible with isomorphisms of subposets} if when $\Pi_1$ is a subposet of $\Pi_{\tPc}^{(d_1)}$ isomorphic to a subposet $\Pi_2$ of $\Pi_{\tPc}^{(d_2)}$, the isomorphism induces a map on the labels of the CL-labellings sending increasing chains to increasing chains, non-increasing chains to non-increasing chains, and preserving the preorder on chains.
\end{enumerate}
\end{defi}

\begin{rque}
The intervals drawn on Figure \ref{partitionPosetPerm} are not isomorphic for our definition, but for instance, the first one is isomorphic to the subposet of $\Pi_6$ drawn on Figure \ref{toto}.
\begin{figure}
\begin{center}
\begin{tikzpicture}[scale=0.5]
\def\e{4cm}
\def\z{6cm}
\node (min3) at (4,0) {$\{1,2,3,\bar{4},5,6\}$};
\node (m2) at (-\z/2,-4) {$\{\bar{1}\}\{2,3,\bar{4},5,6\}$};
\node (d) at ([xshift=\z]m2) {$\{\bar{1},2,5,6\}\{3,\bar{4}\}$};
\node (c) at ([xshift=\z]d) {$\{1,3,\bar{4}\}\{\bar{2},5,6\}$};
\node (e) at ([xshift=\z]c) {$\{1,\bar{2},5,6\}\{3,\bar{4}\}$};
\node (max) at (4,-8) {$\{\bar{1}\}\{\bar{2},5,6\}\{3,\bar{4}\}$};

\draw (min3.south)edge (e.north);
\draw (min3.south)edge (m2.north);
\draw (min3.south)edge (d.north);
\draw (min3.south)edge (c.north);

\draw (c.south)edge (max.north);
\draw (d.south)edge (max.north);
\draw (e.south)edge (max.north);
\draw (m2.south)edge (max.north);
\end{tikzpicture}
\end{center}
\caption{This poset is isomorphic to the first poset of Figure \ref{partitionPosetPerm}}\label{toto}
\end{figure}
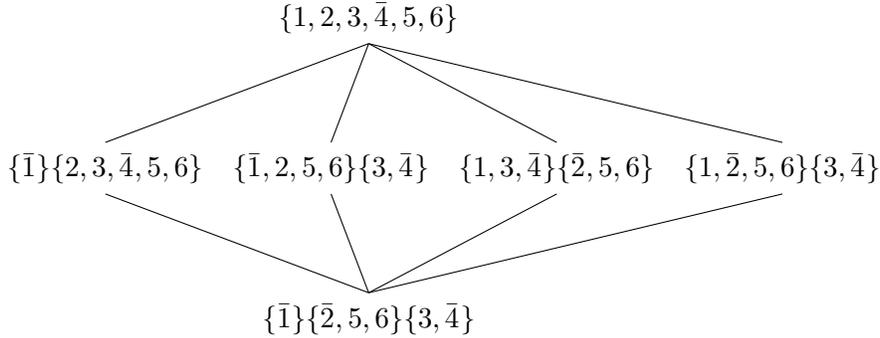 
\end{rque}

\section{Main theorem}\label{Section: main theorem}

In this section, we will consider operadic partition posets endowed with compatible CL-labellings which make these posets CL-shellable.

\subsection{From compatible CL-shellability to PBW bases}

Let us consider a basic-set operad $\tPc$, of the form $\Tc(E)/(R)$ where $R$ is quadratic, and its associated partition posets $\Pi_{\tPc}$. We suppose that these posets are CL-shellable with CL-labellings being compatible with the subposet isomorphisms as defined in the previous subsection.\\

We need the following definition to define a confluent rewriting system on elements of $\Tc(E)$:
\begin{defi}
We say that a maximal chain in a poset is \emph{adjacent} to an other one if it only differs from the first one by two edges. The poset obtained by the set of differing edges between the two chains is then called \emph{diamond}.
\end{defi}

The picture below depicts two adjacent chains. The diamond is the coloured part.

\begin{center}
\begin{tikzpicture}[line width=1pt]
\coordinate (a) at(0,0);
\coordinate (b) at(0,1);
\coordinate (c) at(0,2);
\coordinate (d) at(-1,3);
\coordinate (e) at(1,3);
\coordinate (f) at(0,4);
\coordinate (g) at(0,5);
\coordinate (h) at(0,6);
\draw[dotted] (a)--(b);
\draw[dotted] (f)--(g);
\draw (b)--(c);
\draw (g)--(h);
\draw[blue] (c)--(d)--(f);
\draw[red] (c)--(e)--(f);
\node [right =0.2cm of a] {$x_0=y_0$};
\node [right =0.2cm of b] {$x_{i-1}=y_{i-1}$};
\node [right =0.2cm of c] {$x_i=y_i$};
\node [left =0.2cm of d] {$x_{i+1}$};
\node [right =0.2cm of e] {$y_{i+1}$};
\node [right =0.2cm of f] {$x_{i+2}=y_{i+2}$};
\node [right =0.2cm of g] {$x_{n-1}=y_{n-1}$};
\node [right =0.2cm of h] {$x_n=y_n$};
\foreach \x in {a,b,c,d,e,f,g,h}
\draw[fill=black] (\x) circle(0.1);
\end{tikzpicture}
\end{center}

Let us first describe a partial order on elements of $\Tc(E)$. This order will be defined from the partial preorder on chains given by the lexicographic order on CL-labellings. Denote by $\pi : \mathcal{N}(E) \to \Tc(E)$ the map obtained by forgetting the levels (see Section \ref{normalisedBarConstruction} for a definition of $\Nc(E)$), and by $\cdot$ the concatenation of chains in the poset $\Pi_{\widetilde \Pc}$. We recall from Remark \ref{maximalChains} that maximal chains of $\Pi_{\tPc}^{(d)}$ coincide with $\Nc(E)^{(d)}$. 

\begin{defi}
We say that there is an \emph{exchange relation} between the two elements $\tilde a = \nu^1 \circ_{i_1} \nu^2 \circ_{i_2} \cdots \circ_{i_{l-1}} \nu^l$ and $\tilde b = \mu^1 \circ_{j_1} \mu^2 \circ_{j_2} \cdots \circ_{j_{m-1}} \mu^m$ in $\Nc(E)$ if
\[
\pi(\tilde a) = \pi (\tilde b)
\]
and if there exists $k \in \llbracket 1,\, l-1\rrbracket$ such that
\[
\nu^s = \mu^s \text{ and }  i_s = j_s \text{ for all } s \in \llbracket 1,\, l\rrbracket \backslash \{ k,\, k+1\}.
\]
In particular, this implies that
\[
l = m,\ \nu^k = \mu^{k+1} \text{ and } \nu^{k+1} = \mu^k.
\]
\end{defi}

\begin{exple}
Consider a binary generator $\mu$. There is an exchange relation between $\tilde a = \mu \circ_2 \mu \circ_1 \mu$ and $\tilde b = \mu \circ_1 \mu \circ_3 \mu$. Pictorially, we have
\[
\tilde a = \vcenter{\scriptsize{
\xymatrix@R=5pt@C=5pt@M=2pt{1\ar@{-}[dr] &&
2 \ar@{-}[dl] && 3 \ar@{-}[d] && 4 \ar@{-}[d] &&\\
\ar@{.}[r] & \mu \ar@{.}[rrr] \ar@{-}[d] &&&  *{} \ar@{-}[dr] \ar@{..}[rr] &&  *{} \ar@{-}[dl]\ar@{.}[r] &\\
\ar@{.}[r] & *{} \ar@{.}[rrrr] \ar@{-}[drr] &&&& \mu \ar@{.}[rr] \ar@{-}[dll] &&\\
\ar@{.}[rrr] &&& \mu \ar@{.}[rrrr] \ar@{-}[d] &&&&\\
&&&&&&&\\ }}}
\text{ and }\
\tilde b = \vcenter{\scriptsize{
\xymatrix@R=5pt@C=5pt@M=2pt{& 1\ar@{-}[d] &&
2 \ar@{-}[d] && 3 \ar@{-}[dr] && 4 \ar@{-}[dl] &&\\
\ar@{.}[r] & *{} \ar@{-}[dr] \ar@{.}[rrr] && *{} \ar@{-}[dl] \ar@{.}[rrr] &&&  \mu \ar@{-}[d] \ar@{..}[rr] &&\\
\ar@{.}[rr] && \mu \ar@{.}[rrrr] \ar@{-}[drr] &&&& *{} \ar@{.}[rr] \ar@{-}[dll] &&\\
\ar@{.}[rrr] &&&& \mu \ar@{.}[rrrr] \ar@{-}[d] &&&&\\
&&&&&&&&\\ }}}
\]
\end{exple}

Consider two trees $a$ and $b$ in $\Tc(E)$. Let us first remark that if there are two adjacent (maximal) chains $\tilde{a}$ and $\tilde{b}$ such that $a=\pi(\tilde{a})$ and $b=\pi(\tilde{b})$, then the diamond between $\tilde{a}$ and $\tilde{b}$ comes from one quadratic relation in the operad or from an exchange relation (this second case appears if and only if $a=b$). We will deal with the first type of diamond to define an order on  $\Tc(E)$.

\begin{defi}
For any two elements $a \neq b$ in $\Tc(E)$, we define $a \lessdot b$ if there exist two adjacent chains $\tilde{a}= r \cdot (g \prec x \prec h) \cdot s$ and $\tilde{b}= r \cdot (g \prec y \prec h) \cdot s$ such that:
\begin{itemize}
\item  $a=\pi(\tilde{a})$ and $b=\pi(\tilde{b})$,
\item the CL-labelling given by $\lambda_r^{g,\, h}$ (see Remark \ref{remark: restricted CL-labelling}) of the chain $(g \prec x \prec h)$ is the unique increasing chain (minimal in the lexicographic order) in this interval.
\end{itemize}
\end{defi}

\begin{lem}
This binary relation is anti-symmetric as soon as the partition posets $\Pi_{\tPc}$ admit CL-labellings compatible with the isomorphisms of subposets.
\end{lem}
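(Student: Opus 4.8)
The plan is to argue by contradiction: I would assume $a\neq b$ together with both $a\lessdot b$ and $b\lessdot a$, and derive a contradiction with the CL-labelling axiom. Fix adjacent chains $\tilde a=r\cdot(g\prec x\prec h)\cdot s$ and $\tilde b=r\cdot(g\prec y\prec h)\cdot s$ witnessing $a\lessdot b$, so that $\pi(\tilde a)=a$, $\pi(\tilde b)=b$, and $(g\prec x\prec h)$ carries the unique $\lambda_r^{g,h}$-increasing labelling of the diamond $[g,h]_r$; and fix adjacent chains $\tilde b'=r'\cdot(g'\prec x'\prec h')\cdot s'$ and $\tilde a'=r'\cdot(g'\prec y'\prec h')\cdot s'$ witnessing $b\lessdot a$, so that $\pi(\tilde b')=b$, $\pi(\tilde a')=a$, and $(g'\prec x'\prec h')$ is the unique increasing chain of $[g',h']_{r'}$. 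Note first that if the two rooted diamonds coincide (same root, same interval, same tail), then $[g,h]_r$ has a unique increasing maximal chain which must equal both $(g\prec x\prec h)$ and $(g'\prec x'\prec h')$, forcing $x=x'$ and hence $a=b$; so the subtle case, and the one for which the compatibility hypothesis is needed, is when the two diamonds live in different contexts.

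Next I would use the structural observation made just before the definition of $\lessdot$: since $a\neq b$, each of the two diamonds arises from a single quadratic relation of $\tPc$ rather than from an exchange relation. As $\pi(\tilde a)=\pi(\tilde a')=a$ and $\pi(\tilde b)=\pi(\tilde b')=b$, both diamonds encode the \emph{same} local move transforming the tree $a$ into the tree $b$: the trees $a$ and $b$ differ only in the restricted treewise tensor along one internal edge, where the two sides of the quadratic relation are exchanged. Concretely, the $\tPc$-partitions $\delta_g^x,\delta_x^h$ recording the generators added along the $a$-side of $[g,h]$ coincide, up to an order-preserving relabelling of their minima, with $\delta_{g'}^{y'},\delta_{y'}^{h'}$ (the $a$-side of $[g',h']$), and likewise the $b$-side data $\delta_g^y,\delta_y^h$ matches $\delta_{g'}^{x'},\delta_{x'}^{h'}$.

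Then I would check that $[g,h]$ and $[g',h']$ are isomorphic interval subposets in the precise sense required by the definition of compatibility. The poset isomorphism $\phi:[g,h]\to[g',h']$ is the height-$2$ diamond isomorphism, and the accompanying increasing bijection $f:E_1\to E_2$ between the ground sets $E_1,E_2$ of the two intervals is the relabelling from the previous step. The condition to verify is $\phi(\delta_\lambda^\omega)=\tPc(f)(\delta_\lambda^\omega)$ on each of the four edges, which is exactly the matching of local generators recorded above; here the basic-set hypothesis guarantees that these generators are well defined. Because $a\neq b$, the two sides of each diamond carry distinct local data, so $\phi$ cannot interchange them: it must send the $a$-side of $[g,h]$ (through $x$) to the $a$-side of $[g',h']$ (through $y'$), that is $\phi(x)=y'$ and $\phi(y)=x'$.

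Finally I would invoke the compatibility hypothesis, which ensures that the isomorphism $\phi$ carries increasing chains to increasing chains. By $a\lessdot b$ the chain $(g\prec x\prec h)$ is the unique increasing chain of $[g,h]_r$, so its image $(g'\prec y'\prec h')$ is an increasing chain of $[g',h']_{r'}$. But by $b\lessdot a$ the unique increasing chain of $[g',h']_{r'}$ is $(g'\prec x'\prec h')$, and a CL-labelling admits exactly one increasing maximal chain in each rooted interval; hence $y'=x'$, whence $a=\pi(\tilde a')=\pi(\tilde b')=b$, contradicting $a\neq b$. I expect the main obstacle to be the middle step, namely producing the relabelling $f$ and verifying the equalities $\phi(\delta_\lambda^\omega)=\tPc(f)(\delta_\lambda^\omega)$ on all four edges, together with the argument that $\phi$ respects the $a$-side rather than crossing the two sides; this is precisely where the analysis of $\delta_\lambda^\omega$ for covering relations and the basic-set property enter.
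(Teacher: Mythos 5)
Your proof is correct and follows essentially the same route as the paper's: both arguments observe that any two diamonds witnessing a relation between the same pair $a\neq b$ come from the same quadratic relation in $R$ (since liftings differ only by exchange relations) and hence form isomorphic subposets, then use the compatibility of the CL-labellings together with the uniqueness of the increasing chain in a rooted interval to rule out $a\lessdot b$ and $b\lessdot a$ simultaneously. Your write-up is merely more explicit than the paper's about constructing the isomorphism of diamonds and checking that it sends the $a$-side to the $a$-side, which the paper leaves implicit.
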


\begin{proof}
Let $a$ and $b$ be two distinct elements in $\Tc(E)$. If there exist two pairs of adjacent chains $(\tilde a,\, \tilde b)$ and $(\bar a,\, \bar b)$, with every first component projecting to $a$ and second component projecting to $b$, then the associated diamonds correspond to the same relation in $R$. (Indeed, elements in $\pi^{-1}(a)$ only differ from each other by a sequence of exchange relations.) The diamonds form therefore two isomorphic subposets of $\Pi_{\widetilde \Pc}$. The compatibility between CL-labellings implies that the lexicographic order on CL-labellings of two adjacent chains $\tilde a$ and $\tilde b$ does not depend on the pair of lifting of $a$ and $b$: it allows us to define the previous binary relation on such $a$ and $b$ in $\Tc(E)$. It is anti-symmetric since the increasing chain is always minimal in the lexicographic order.
\end{proof}

\begin{lem}\label{lemOrdre}
Assume that the partition posets $\Pi_{\tPc}$ admit CL-labellings compatible with the isomorphisms of subposets. The reflexive and transitive closure of the relation $\lessdot$, denoted by $\leq$, satisfies that 
\begin{equation}\label{compRel}
a < b \implies \min \left(\left\{\tilde{a}| \pi(\tilde{a})=a\right\} \right) <_{lex} \min \left( \left\{\tilde{b}| \pi(\tilde{b})=b \right\} \right),
\end{equation}
where 
\begin{itemize}
 \item the relation $<_{lex}$ is the order on chains given by the lexicographic order on the associated CL-labelling,
 \item the minimum is taken according to this order,
\item only maximal chains are considered.
\end{itemize}
 Hence, $\leq$ is a well-defined partial order.
\end{lem}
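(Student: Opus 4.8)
The plan is to first reduce the implication~\eqref{compRel} to the case of a single generating step $a \lessdot b$, and then to deduce anti-symmetry formally. Since $\leq$ is by definition the reflexive–transitive closure of $\lessdot$, a relation $a < b$ arises from a chain $a = c_0 \lessdot c_1 \lessdot \cdots \lessdot c_k = b$. The lexicographic relation $<_{lex}$ on label tuples is a strict partial order (the lexicographic order over a poset $\Lambda$ is transitive and irreflexive), so it suffices to prove
\[
a \lessdot b \implies m(a) <_{lex} m(b), \qquad m(a) := \min\{\tilde a \mid \pi(\tilde a) = a\}\ \text{(minimum for } <_{lex}),
\]
and then to chain these strict inequalities along $c_0, \ldots, c_k$ by transitivity of $<_{lex}$. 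Once~\eqref{compRel} holds, anti-symmetry is immediate: if $a \le b$ and $b \le a$ with $a \neq b$, then $a < b$ and $b < a$, hence $m(a) <_{lex} m(b)$ and $m(b) <_{lex} m(a)$, contradicting that $<_{lex}$ is a strict partial order. Thus $\leq$, being reflexive, transitive and anti-symmetric, is a partial order.

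For the generating step I would start from the witnessing adjacent chains $\tilde a = r \cdot (g \prec x \prec h) \cdot s$ and $\tilde b = r \cdot (g \prec y \prec h) \cdot s$ from the definition of $\lessdot$. They share the root $r$, so their labels agree along the bottom edges; on the rooted interval $[g,\, h]_r$ the chain through $x$ is, by hypothesis, the unique increasing one and therefore lexicographically precedes all other chains of that interval; after $h$ the two chains coincide along $s$. Combining these three observations gives $\tilde a <_{lex} \tilde b$ directly for the \emph{specific} lifts. The essential point that remains is to pass from these particular lifts to the lexicographic minima $m(a)$ and $m(b)$ of the whole fibres $\pi^{-1}(a)$ and $\pi^{-1}(b)$.

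This is where the exchange relations and the compatibility hypothesis enter. The fibre $\pi^{-1}(a)$ (resp. $\pi^{-1}(b)$) consists of all level orderings of the tree $a$ (resp. $b$), and any two of them are connected by exchange relations, i.e. adjacent transpositions of incomparable vertices; in particular the two vertices carrying the quadratic relation linking $a$ and $b$ can be brought to consecutive levels by bubbling one of them past the vertices lying strictly between them, each of which is neither an ancestor nor a descendant of it, hence incomparable to it. The diamonds produced in this way are isomorphic subposets of $\Pi_{\tPc}$ realizing the same relation of $R$, so the compatibility of the CL-labellings guarantees that the increasing side is always the $a$-side and that the lexicographic comparison of two adjacent chains is independent of the chosen lift — exactly as in the preceding anti-symmetry lemma. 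The heart of the argument, and the step I expect to be the main obstacle, is to reconcile the two \emph{independent} minimizations over $\pi^{-1}(a)$ and $\pi^{-1}(b)$: one must produce, starting from a lexicographic-minimal lift of $b$, a lift $\tilde a'$ of $a$ reached by a lex-decreasing diamond move, so that $m(a) \le_{lex} \tilde a' <_{lex} m(b)$. This is precisely the content requiring the defining property of a CL-labelling that the increasing chain of each rooted interval is its global lexicographic minimum, used together with the compatibility hypothesis to transport this minimality across the diamond; the bookkeeping of which vertex to bubble and in which direction becomes routine once this minimality principle is in place.
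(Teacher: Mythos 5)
Your overall architecture coincides with the paper's: reduce \eqref{compRel} to a single generating step $a \lessdot b$ and conclude by transitivity of $<_{lex}$, then deduce anti-symmetry of $\leq$ from the strict inequality between the fibre minima. You also correctly isolate the crux --- one must compare the two \emph{independent} minima over $\pi^{-1}(a)$ and $\pi^{-1}(b)$, not the particular witnessing lifts --- and you name the right ingredients (exchange relations, compatibility with subposet isomorphisms, and the fact that the increasing chain of a rooted interval is its lexicographic minimum). However, the step you set aside as ``routine bookkeeping'' is exactly where the content of the proof lies, and as written your plan does not close. You want a lift $\tilde a'$ of $a$ with $\tilde a' <_{lex} \tilde b$, where $\tilde b$ is the minimal lift of $b$, obtained by a diamond move applied to some lift $\hat b$ of $b$ in which the two vertices carrying the quadratic relation sit on consecutive levels. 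But the exchange relations needed to produce such a $\hat b$ from $\tilde b$ may \emph{increase} the chain lexicographically, so the two facts you actually obtain --- $\tilde b \leq_{lex} \hat b$, and ``the $a$-side of the diamond precedes the $b$-side'' --- do not compose to give $\tilde a' <_{lex} \tilde b$. Without controlling \emph{where} the labels of $\hat b$ differ from those of $\tilde b$, the inequality between the minima does not follow.

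The paper closes precisely this gap by a specific choice of $\hat b$ (arguing by contradiction, with $\tilde b$ an offending minimal lift of $b$): all exchanges are performed \emph{above} the diamond, so that $\hat b$ coincides with $\tilde b$ on the entire initial segment of the chain up to the middle vertex $q$ of the diamond, and the constructed lift $\hat a$ of $a$ coincides with $\tilde b$ up to the bottom vertex of the diamond. The chain-edge labelling axiom then forces the labels of $\tilde b$, $\hat b$ and $\hat a$ to agree on that common initial segment, so the lexicographic comparison between $\hat a$ and $\tilde b$ is decided at the edges of the diamond, where the CL property of the corresponding rooted interval (the increasing, $a$-side chain is lexicographically first) yields $\hat a <_{lex} \tilde b$ --- contradicting the minimality assumption. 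If you add this localisation of the exchange moves and the resulting agreement of labels below the diamond, your argument becomes the paper's proof; without it, the decisive inequality is asserted rather than established.
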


\begin{proof}
By the transitivity of $<_{lex}$, it is enough to prove the implication (\ref{compRel}) for $a \lessdot b$. We use a reductio ad absurdum. Let us consider that there exist two elements $a \neq b$ in $\Tc(E)$ satisfying $a \lessdot b$ and that there exists a maximal chain $\tilde{b}$ projecting to $b$ smaller than any chain $\tilde{a}$ projecting to $a$. As $a \lessdot b$, the two trees only differs by a (set) quadratic relation, say $\mu^1 \circ_i \mu^2 = \nu^1 \circ_j \nu^2$ with $\mu^1 \circ_i \mu^2$ a subtree of $a$ and $\nu^1 \circ_j \nu^2$ a subtree of $b$. We construct a maximal chain $\hat{b}$ such that $\pi(\hat{b}) = b$ as follows: $\hat{b}$ coincides with $\tilde{b}$ until the level which contains $\nu^2$ (be aware that we read $\tilde{b}$ from the top level to the bottom level), the next level is given by means of $\nu^1$ and the other levels can be chosen without other condition than $\pi(\hat{b}) = b$. Similarly, let $\hat{a}$ be the maximal chain projecting to $a$ defined by: $\hat{a}$ coincides with $\tilde{b}$ until the level before the level which contains $\nu^2$, the next level is given by means of $\mu^2$, the one after is given by means of $\mu^1$ and the other levels are chosen as for $\hat{b}$. We get the following picture.
\begin{center}
\begin{tikzpicture}
\coordinate (p1) at (0,0);
\coordinate (p2) at (0,1);
\coordinate (p3) at (0,2);
\coordinate (p4) at (1,2);
\coordinate (p5) at (1,3);
\coordinate (p6) at (0,5);
\draw[purple, snake=coil,segment aspect=0] (p1)--(p2);
\draw[blue](p3) edge[snake=coil,segment aspect=0] node[shift={(-0.4, -0.8)}]{$\tilde{b}$} (p6);
\draw[blue] (p2)--(p3) edge node[below]{$\hat{b}$} (p5);
\draw[purple, snake=coil,segment aspect=0](p5)--(p6);
\draw[red] (p2)--(p4) edge node[right]{$\hat{a}$} (p5);
\draw[fill=black] (p1) circle(0.1);
\draw[fill=black] (p2) circle(0.1);
\draw (-0.35,2) node{$q$};
\draw[fill=black] (p3) circle(0.1);
\draw[fill=black] (p4) circle(0.1);
\draw[fill=black] (p5) circle(0.1);
\draw[fill=black] (p6) circle(0.1);
\end{tikzpicture}
\end{center}

Then, the exchange relations to go from $\tilde b$ to $\hat b$ change the CL-labelling only above the element $q$ and the change of the CL-labelling corresponding to the relation $a < b$ modifies the label corresponding to the edge below $q$. This would give a CL-labelling for $\hat{a}$ smaller than the CL-labelling of $\tilde b$, which contradicts the second part of the hypotheses. Thus the first point of the lemma holds. 

As the lexicographic order is anti-symmetric, the binary relation $\leq$ on $\Tc(E)$ also is. It is also reflexive and transitive by definition, hence the result.
\end{proof}

Let us now give a characterisation for minimal elements in $\Tc(E)$ in terms of their representatives in $\Nc(E)$.
Recall that elements of $\Tc(E)$ have a unique shuffle representation using rooted planar trees. We consider in the sequel only the unique shuffle (planar) representation of the trees.

Assume that the partition posets $\Pi_{\tPc}$ admit CL-labellings compatible with the isomorphisms of subposets and let $\mu^1 \circ_i \mu^2$ be a term in a quadratic relation of $\tPc$. By means of the isomorphisms of subposets, when this relation appears as two consecutive levels in a maximal chain, the fact that the corresponding CL-labelling is increasing or not only depends on the quadratic term.
We first show the following lemma.

\begin{lem}\label{lem: increasing chain}
Let $a$ be a tree in $\Tc(E)$. Assume that the partition posets $\Pi_{\tPc}$ admit CL-labellings compatible with the isomorphisms of subposets and that the CL-labellings corresponding to every quadratic subtree in $a$ are increasing. Then there exists a maximal chain in $\pi^{-1}(a) \subset \Nc(E)$ whose CL-labelling is the minimal increasing chain.
\end{lem}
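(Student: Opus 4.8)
The plan is to identify $\pi^{-1}(a)$ with the set of maximal chains of the rooted interval $[\hat 0,\, \bar a]$ in $\Pi_{\tPc}^{(d)}$, where $d$ is the weight of $a$, $\bar a$ is its associated maximal element, $\hat 0$ is the (unique, finest) minimal element of the corresponding arity, and the root is trivial. Under the isomorphism of Remark \ref{maximalChains}, a maximal chain of this interval is exactly a levelled tree of $\Nc(E)^{(d)}$ projecting to $a$ under $\pi$; moreover, as recalled in the proof of the anti-symmetry lemma, two elements of $\pi^{-1}(a)$ differ only by a sequence of exchange relations. Consequently any two consecutive levels $(\nu^k,\, \nu^{k+1})$ of such a chain fall into exactly one of two cases: either $\nu^{k+1}$ is grafted onto an input of $\nu^k$, so that the pair is a quadratic subtree of $a$, or $\nu^k$ and $\nu^{k+1}$ lie in independent branches and the pair is an exchange pair. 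I would then pick the chain $c$ minimal for $<_{lex}$ among all elements of $\pi^{-1}(a)$ and prove that $c$ is strictly increasing; since $\pi^{-1}(a)$ is precisely the set of maximal chains of $[\hat 0,\, \bar a]$, CL-shellability forces $c$ to be the unique increasing chain of this rooted interval, that is, the minimal increasing chain, which gives the statement.

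First I would treat the exchange pairs. Suppose $c$ has an exchange pair $(\nu^k,\, \nu^{k+1})$ whose labels are not strictly increasing. Applying the corresponding exchange relation produces another element $c'$ of $\pi^{-1}(a)$; by the chain-edge labelling axiom its labels agree with those of $c$ along the bottom $k-1$ edges, and by the compatibility of the CL-labellings with isomorphisms of subposets the two diamonds carry matching labels, so that the label of $c'$ at level $k$ is strictly smaller than that of $c$. This contradicts the minimality of $c$ for $<_{lex}$, and shows that every exchange pair of $c$ is strictly increasing.

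It then remains to handle the quadratic pairs, which is where the hypothesis enters. By the observation preceding the statement, whether the two-step labelling attached to a quadratic term $\mu^1 \circ_i \mu^2$ is increasing depends only on the quadratic term, via the isomorphisms of subposets; since every quadratic subtree of $a$ is assumed increasing, every quadratic pair of $c$ is strictly increasing as well. Combining the two cases, all consecutive labels of $c$ strictly increase, so $c$ is the necessarily unique increasing maximal chain of $[\hat 0,\, \bar a]$, and it lies in $\pi^{-1}(a)$ by construction. The main obstacle is the exchange step: one must check that swapping a non-increasing independent pair genuinely lowers the lexicographic order, which relies on combining the chain-edge axiom (the labels below level $k$ are untouched) with the isomorphism-compatibility (so that the exchanged diamond is relabelled coherently); the slightly delicate point is to exclude equal consecutive labels on an exchange pair, which I expect to follow from the strictness built into the CL-labelling together with this same compatibility.
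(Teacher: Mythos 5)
Your argument is correct, but it takes a genuinely different route from the paper. The paper proceeds by induction on the weight $d$: it peels off the top generator $\mu$, makes the remaining chain increasing by the inductive hypothesis, then pushes $\mu$ downwards through successive exchange relations until it meets a quadratic subtree (where the hypothesis guarantees an increasing label), and finally re-intertwines the resulting forest of levelled trees into increasing order. Your proof replaces this constructive sorting by an extremal argument: take a $<_{lex}$-minimal element $c$ of the finite fibre $\pi^{-1}(a)$ and show each consecutive pair of labels increases --- quadratic pairs by the hypothesis on $a$, exchange pairs because the length-two interval they span is exactly a diamond (the two blocks being independent and the operad weight-graded basic-set), so that if $c$'s sub-chain were not the unique increasing chain of that rooted diamond, the exchanged chain would precede it lexicographically while staying in $\pi^{-1}(a)$, contradicting minimality; transitivity then makes $c$ the increasing chain of $[\hat 0,\, \bar a]$. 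This is shorter and avoids the delicate intertwining step of the paper, at the price of being non-constructive. Two small inaccuracies do not affect the argument: $\pi^{-1}(a)$ is in general a \emph{proper} subset of the maximal chains of $[\hat 0,\, \bar a]$ (chains representing a different tree $b$ with $b=a$ in $\tPc$ also occur), but you only use the inclusion and the fact that exchange relations preserve the fibre; and the appeal to isomorphism-compatibility in the exchange step is superfluous, since both chains of the diamond share the same root and the defining property of the CL-labelling of the rooted interval $[x_{k-1},\, x_{k+1}]_r$ already yields the strict lexicographic decrease (in particular ruling out equal consecutive labels).
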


\begin{proof}
We prove the result by induction on the height $d$ of the maximal chains in $\pi^{-1}(a)$.

{\bf Base cases}: there is nothing to prove for $d=1$ and the case $d=2$ is obvious.

{\bf Inductive step}: assume that $d \geq 3$. We suppose that the result holds for all maximal chains of height $< d$. Let $\tilde{a}$ be in $\pi^{-1}(a)$. We write $\tilde{a} = \hat{a} \circ_i \mu$. By the induction hypothesis, there exists a maximal chain $\hat{a}'$ whose CL-labellings is the minimal increasing chain and such that $\hat{a}' \circ_i \mu \in \pi^{-1}(a)$. If the CL-labelling associated with $\hat{a}' \circ_i \mu$ is not the increasing chain, we write $\hat{a}' = \hat{a}'' \circ_j  \mu'$. We know that the two last levels of $\hat{a}' \circ_i \mu$, given by $\mu'$ and $\mu$, do not correspond to a quadratic relation otherwise the associated CL-labelling would be increasing by assumption on $a$ and the CL-labelling associated with $\hat{a}' \circ_i \mu$ would be too.

We can therefore consider the exchange relation associated with these two levels to get the levelled tree $\hat{a}'' \circ_{i'} \mu \circ_{j'} \mu'$. The CL-labelling associated with the two last levels is now increasing. If the CL-labelling associated with $\hat{a}'' \circ_{i'} \mu$ is non-increasing, we continue similarly. Otherwise, we stop. We finally get $\bar{a} \circ_k \mu \circ_l \tilde{S}$ where $S$ can be seen as a forest of intertwined levelled trees coming from a forest $S$ of trees in $\Tc(E)$. 

By the induction hypothesis, for any tree $b$ in the forest $S$, there exists a levelled tree $\tilde{b}$ projecting to $b$ by $\pi$ and such that the associated CL-labelling is increasing. It is possible to intertwine the levelled trees $\tilde{b}$ in order to get a forest of intertwined levelled trees $\hat{S}$ associated with the same forest $S$ as $\tilde{S}$ and such that the corresponding CL-labelling (rooted by $\bar{a} \circ_k \mu$) is increasing. Indeed, let us denote by $\hat{S}'$ the forest of intertwined levelled trees defined as follows: the first levels are given by the levelled tree $\tilde{b}_1$ corresponding to a first tree $b_1$ in the forest $S$, then we put the levelled tree $\tilde{b}_2$ corresponding to a second tree $b_2$ in $S$ and so on. If the CL-labelling associated with the first term of $\tilde{b}_2$ and the last term of $\tilde{b}_1$ is non-increasing, we make use of an exchange relation in order to get an increasing CL-labelling. We continue until we get an increasing CL-labelling. We do the same reasoning with the second term in $\tilde{b}_2$ and so on and so forth and for the other levelled trees $\tilde{b}_k$. We finally get the wanted forest of intertwined levelled trees $\hat{S}$.

We conclude by saying that the CL-labelling associated with $\bar{a} \circ_k \mu \circ_l \hat{S}$ is increasing since the CL-labelling associated with the levelled tree corresponding to $\mu$ and the first level of $\hat{S}$ is increasing by construction of $\tilde{S}$.
\end{proof}

We are now able to prove the following characterisation.

\begin{lem} \label{formNormale}
An element in $\Tc(E)$ is minimal for the previously defined order $\leq$ if and only if one of its representatives in $\Nc(E)$ is the minimal increasing chain in the interval.
\end{lem}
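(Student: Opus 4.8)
The plan is to prove the two implications separately, using Lemma~\ref{lem: increasing chain} for one direction and Lemma~\ref{lemOrdre} together with the defining property of a CL-labelling for the other. Throughout, I record the following observation: all trees comparable to a fixed $a$ under $\leq$ project to maximal chains of a single interval $[\hat 0,\, \bar\nu]$ of some $\Pi_{\tPc}^{(d)}$. Indeed, a covering step $a \lessdot b$ (or $b \lessdot a$) comes from a quadratic relation of $\tPc$, which leaves unchanged the top element $\bar\nu \in \tPc^{(d)}$ of the chain; hence $a$ and all trees $\leq$-comparable to it lie over one and the same interval.

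First I would treat the direction ($\Rightarrow$). Assume that $a$ is minimal, and claim that every quadratic subtree of $a$ has an increasing CL-labelling. Suppose, on the contrary, that some quadratic subtree, occurring as two consecutive levels $\mu^1 \circ_i \mu^2$ of a chain in $\pi^{-1}(a)$, is non-increasing; by compatibility of the CL-labellings with isomorphisms of subposets, whether a quadratic term is increasing depends only on that term, so this property is well defined. Replacing this subtree using the corresponding quadratic relation of $R$ produces a tree $b \neq a$ (the relation being non-trivial) together with a pair of adjacent chains $\tilde a = r \cdot (g \prec x \prec h) \cdot s$ and $\tilde b = r \cdot (g \prec y \prec h) \cdot s$ in which the local chain projecting to $b$ is the unique increasing one of $[g,\, h]_r$. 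By definition this yields $b \lessdot a$, hence $b < a$, contradicting the minimality of $a$. Therefore all quadratic subtrees of $a$ are increasing, and Lemma~\ref{lem: increasing chain} provides a maximal chain in $\pi^{-1}(a) \subset \Nc(E)$ whose CL-labelling is the minimal increasing chain of the interval.

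Then I would treat ($\Leftarrow$). Suppose that some representative $\tilde a \in \pi^{-1}(a)$ is the minimal increasing chain of the interval $[\hat 0,\, \bar\nu]$. By the defining property of a CL-labelling, this increasing chain precedes lexicographically every maximal chain of the interval; in particular $\tilde a = \min\{\tilde a \mid \pi(\tilde a) = a\}$ and $\tilde a \leq_{lex} c$ for every maximal chain $c$ over this interval. If $a$ were not minimal, there would exist $b$ with $b < a$, and Lemma~\ref{lemOrdre} would give $\min\{\tilde b \mid \pi(\tilde b) = b\} <_{lex} \min\{\tilde a \mid \pi(\tilde a) = a\} = \tilde a$. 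But $\min\{\tilde b \mid \pi(\tilde b) = b\}$ is itself a maximal chain of the same interval, so it satisfies $\tilde a \leq_{lex} \min\{\tilde b \mid \pi(\tilde b) = b\}$, a contradiction. Hence $a$ is minimal.

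The main obstacle I anticipate lies in the ($\Rightarrow$) direction: one must argue that a single non-increasing quadratic subtree already witnesses a strictly smaller tree, which relies on the compatibility hypothesis to turn ``increasing quadratic term'' into a context-independent notion, and one must then check that the hypothesis of Lemma~\ref{lem: increasing chain} is exactly the statement that \emph{all} quadratic subtrees of $a$ are increasing. The ($\Leftarrow$) direction is comparatively formal, resting only on the implication of Lemma~\ref{lemOrdre} and on the lexicographic minimality built into the definition of a CL-labelling.
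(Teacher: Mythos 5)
Your proposal is correct and follows essentially the same route as the paper: the ($\Leftarrow$) direction via Lemma~\ref{lemOrdre} and the lexicographic minimality of the increasing chain, and the ($\Rightarrow$) direction via Lemma~\ref{lem: increasing chain}, detecting a non-increasing quadratic subtree and replacing it to produce a strictly smaller tree (the paper phrases this as a contraposition, you as a contradiction, but the content is identical). Your preliminary observation that all $\leq$-comparable trees live over the same interval is left implicit in the paper but is a useful clarification.
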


\begin{proof}
\begin{description}
\item[$\Leftarrow$] We first assume that some $\tilde a \in \Nc(E)$ representing $a \in \Tc(E)$ is the minimal increasing chain. Using Lemma \ref{lemOrdre}, the element $a$ is minimal otherwise there would exist a chain smaller than $\tilde{a}$, which contradicts the hypotheses.

\item[$\Rightarrow$] To prove the converse direction, we use a proof by contraposition. Let $a$ be an element in $\Tc(E)$ such that none of the representatives $\tilde{a}$ in $\Nc(E)$ is the minimal increasing chain (for the CL-labelling). By Lemma \ref{lem: increasing chain}, we get that there exists (at least) one quadratic subtree $q$ in $a$ and a quadratic tree $r$ such that $r \lessdot q$. It follows that the tree $b$ obtained by means of $a$ where the subtree $q$ is replaced by $r$ satisfies $b \lessdot a$. This shows that $a$ is not minimal.
\end{description}
\end{proof}

\begin{thm} \label{ThmCLPBW}
Let $\widetilde \Pc$ be a quadratic basic-set operad and let $\Pi_{\widetilde \Pc}$ be the associated operadic partition posets. We assume that the posets $\left\{\Pi_{\widetilde \Pc}^{(d)}\right\}_d$ admit CL-labellings compatible with isomorphisms of subposets. 

Then, the algebraic operad $\Pc = \Tc(E)/(R)$ associated to $\widetilde \Pc$ admits a PBW basis with a partial order, as defined in Definition \ref{defPBW}.
\end{thm}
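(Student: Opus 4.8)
The plan is to construct the PBW basis explicitly as the set of minimal elements for the partial order $\leq$ built in Lemma \ref{lemOrdre}, and then to verify that this set satisfies the three defining properties together with conditions (1) and (2) of Definition \ref{defPBW}. More precisely, I would set
\[
\Bc^{\Pc} \coloneqq \{ a \in \Bc^{\Tc(E)}\ ;\ a \text{ is minimal for } \leq \},
\]
where minimality is understood in the sense already characterised by Lemma \ref{formNormale}: an element is in $\Bc^{\Pc}$ if and only if one of its representatives in $\Nc(E)$ is the minimal increasing chain of its interval. The key tool throughout is Theorem \ref{Theorem: order complex and normalised bar construction} (via Remark \ref{maximalChains}), which identifies maximal chains of $\Pi_{\tPc}^{(d)}$ with $\Nc(E)^{(d)}$, so that combinatorial statements about chains and their CL-labellings translate directly into statements about treewise tensors in $\Tc(E)$.

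First I would check the three bullet-point requirements. That $1 \in \Bc^{\Pc}$ and $\Bc^E \subset \Bc^{\Pc}$ is immediate from the base cases of Lemma \ref{lem: increasing chain} (heights $d=1$ and $d=2$, where every chain is trivially increasing and hence minimal). The spanning/basis property is the crucial point: I would argue that the minimal elements represent a basis of $\Pc = \Tc(E)/(R)$. For spanning, the relation $\lessdot$ together with Lemma \ref{formNormale} gives a rewriting procedure---replacing any non-increasing quadratic subtree $q$ by the increasing term $r$ with $r \lessdot q$---and since $\leq$ is a genuine partial order (Lemma \ref{lemOrdre}) with a well-founded structure coming from the weight grading and the finiteness of each arity, this process terminates, so every element of $\Tc(E)$ rewrites to a combination of minimal elements modulo $(R)$. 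For linear independence, I would use Theorem \ref{Theorem: order complex and normalised bar construction} to count: the number of minimal increasing chains in each interval matches the dimension of the corresponding graded piece of $\Pc$, because the CL-shellability guarantees exactly one increasing maximal chain per closed rooted interval, and these increasing chains compute the homology that governs $\dim \Pc^{(d)}$.

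Next I would verify conditions (1) and (2). Condition (1) concerns a composition $\alpha \circ_{i,w} \beta$ of two basis elements: either it is already minimal (hence in $\Bc^{\Pc}$), or it is non-minimal, in which case applying the rewriting from Lemma \ref{formNormale} expresses it as $\sum_\gamma c_\gamma \gamma$ with each $\gamma \in \Bc^{\Pc}$ and $\gamma < \alpha \circ_{i,w}\beta$; here the compatibility of the order with pointed-shuffle composition (the displayed compatibility axiom preceding Definition \ref{defPBW}, established via Lemma \ref{lemOrdre}) ensures the inequality $\gamma < \alpha \circ_{i,w}\beta$ holds in the ordered basis of $\Tc(E)$. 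Condition (2)---that a treewise tensor is in $\Bc^{\Pc}$ iff each of its restrictions $\alpha_{|\tau_e}$ to a subtree spanning a single internal edge is in $\Bc^{\Pc}$---is exactly the local-to-global content of Lemma \ref{lem: increasing chain}: a chain realises the minimal increasing labelling globally precisely when every quadratic subtree does so locally, and the compatibility with isomorphisms of subposets is what lets one read the (non)increasing status of a quadratic subtree off the quadratic term alone, independently of its position.

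The main obstacle I expect is the basis (linear independence and correct dimension count) step. Termination of the rewriting and the two compatibility conditions follow fairly mechanically from the lemmas already assembled, but establishing that the minimal increasing chains are linearly independent in $\Pc$---rather than merely spanning---requires genuinely using the CL-shellability, not just the well-definedness of the order. I would address this by invoking Theorem \ref{Theorem: order complex and normalised bar construction}: CL-shellability makes each $\Pi_{\tPc}^{(d)}$ Cohen--Macaulay, so the reduced homology of the order complex is concentrated in top degree and its rank is computed by the falling (non-increasing) chains, while the minimal increasing chains form a complementary set whose cardinality is forced to equal $\dim \Pc^{(d)}$. Matching this count against the number of minimal elements of $\Tc(E)/(R)$ then yields that the rewriting produces no collapses, giving linear independence and completing the proof that $\Bc^{\Pc}$ is a PBW basis.
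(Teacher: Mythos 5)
Your overall architecture coincides with the paper's: take the partial order of Lemma \ref{lemOrdre}, turn it into a rewriting system whose normal forms are the elements characterised by Lemma \ref{formNormale}, check termination, and verify conditions (1) and (2) of Definition \ref{defPBW} using the quadraticity of the relations and the compatibility with isomorphisms of subposets. The point where your argument breaks is exactly the one you flag as the main obstacle: linear independence. Your justification --- that Cohen--Macaulayness concentrates the reduced homology of the order complex in top degree, that this homology ``governs $\dim \Pc^{(d)}$'', and that the increasing chains form a ``complementary set'' of the falling chains of the right cardinality --- is not correct. The homology of $\Delta\bigl(\Pi_{\tPc}^{(d)}\bigr)$ computes the bar homology of $\Pc$, i.e.\ (when everything is Koszul) the Koszul dual, not $\Pc^{(d)}$ itself; there is no homological mechanism forcing the number of increasing maximal chains to equal $\dim \Pc^{(d)}$, and the falling chains have nothing to do with that dimension. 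As written, the basis step does not go through.

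What actually closes the gap is much more elementary and is what the paper uses: for a basic-set operad the maximal elements of $\Pi_{\tPc}^{(d)}$ are exactly the elements of $\tPc^{(d)}$, and the defining property of a CL-labelling gives \emph{exactly one} increasing maximal chain in each interval $[\hat 0, \bar\nu]$. The paper exploits this uniqueness to prove confluence of the critical pairs of the rewriting system: if a weight-$3$ source $a$ rewrites to two normal forms $a'$ and $a''$, then by Lemma \ref{formNormale} each has a representative that is \emph{the} minimal increasing chain of the single poset $P_a$ attached to the common image of $a$ in $\tPc$, whence $a' = a''$; convergence then yields the basis property via the remark following Definition \ref{defPBW}. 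Your counting idea can be repaired along the same lines without homology (distinct normal forms have their increasing representatives in distinct intervals, so they inject into $\tPc^{(d)}$, and spanning does the rest), but that repair is precisely the uniqueness argument, not a Cohen--Macaulay computation. A second, smaller inaccuracy: you attribute the compatibility of the order with pointed-shuffle composition to Lemma \ref{lemOrdre}, whereas it is a separate consequence of the compatibility of the CL-labellings with isomorphisms of subposets (the rewriting rules being context-free), which the paper states explicitly in the proof.
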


\begin{proof}
We previously defined a partial order (see Lemma \ref{lemOrdre}). This partial order gives a rewriting system on $\Tc(E)$ by defining rewriting rules $a \rightarrow b$ for any covering relations $a>b$.
Moreover the CL-compatibility ensures that the rewriting rules are context-free, that is, if $a \rightarrow b$, then for every $u$ and $v$ in $\Tc(E)$ and every pointed shuffles $w,w'$ we have $u \circ_{i,w} a \circ_{j,w'} v  \rightarrow u \circ_{i,w} b \circ_{j,w'} v$. 
The obtained order is thus compatible with the shuffle composition.

As the lexicographic order is decreasing at every rewriting step and as the number of generators (and thus of chains of a fixed weight) is finite, the rewriting system is terminating.

Before studying confluence, let us observe the crucial fact that normal forms in $\Tc(E)$ are exactly minimal elements for the previously defined partial order $\leq$. Hence, according to Lemma \ref{formNormale}, an element in $\Tc(E)$ is a normal form if and only if one of its representatives in $\Nc(E)$ is the minimal increasing chain in the interval.

We now show the confluence of critical pairs. 

Let $a$ be in $\Tc(E)$ be the source of a critical pair. It implies that $a$ is in weight $3$ (as relations are quadratic in the operad) and that 
$a$ can be rewritten as two different elements $a'_0$ and $a''_0$, which can be each further rewritten into normal forms $a'$ and $a''$ respectively 
(because of termination of the rewriting system in $\Tc(E)$). 

Let us consider the partition poset $P_a$ associated with the element $a$ seen as an element of the operad. 
Among all chains $\tilde{a'}$ representing $a'$, one of them is the minimal increasing one in  $P_a$, as $a'$ cannot be further rewritten in $\Tc(E)$.
The same reasoning applies to $a''$: some chain $\tilde{a''}$ representing $a''$ is the minimal increasing one in $P_a$.
Therefore $a'=a''$.

This proves the confluence of the critical pair of source $a$.

Then we have a convergent rewriting system on $\Tc(E)$. The first point of Definition \ref{defPBW} comes from the fact that the order is decreasing with rewriting steps and the second point, from the fact that the operad is quadratic, hence all rewriting steps come from rewriting steps on product of generators of the operad.
\end{proof}

\begin{rque}
The result is different from the result of Quinn in \cite{dQ13} which studies posets and their incidence algebras, even if it looks similar at first sight. Namely, starting with a poset, he shows that the poset is LEX-shellable if and only if the incidence algebra is PBW for some specific lexicographic order. There is no obvious link between our results, as the associations between a poset and an algebraic object (algebra or operad) differ in the two papers.
\end{rque}

\subsection{Example: Commutative and permutative case}

We described in Section \ref{expleOperad} the partition posets associated with the operads $\com$ and $\perm$. Let us illustrate the previous main theorem with these posets.

First, let us start with partition posets associated with $\com$, i.e. usual partition posets. A CL-labelling (EL-labelling in fact, i.e. CL-shellable, with the label of a given edge being independent of the chain containing it) of these posets was given in \cite{ShBjorn} by Bj{\"o}rner, following an idea of Gessel. The edge between the partitions $(A_1, \ldots, A_p)$ and $(A_1, \ldots, \hat{A_i}, \ldots, \hat{A_j}, \ldots,\, A_p, \allowbreak A_i \cup A_j)$ is given by
\[
\delta = \vcenter{
\xymatrix@M=0pt@R=4pt@C=4pt{\min(A_i)&& \min(A_j)\\
& *{} \ar@{-}[lu] \ar@{-}[ur] \ar@{-}[d] &\\
&&}}
\]
and is labelled by $\max(\min(A_i),\min (A_j))$. 

\begin{prop} \label{compCL} This labelling is compatible with isomorphisms of subposets.
\end{prop}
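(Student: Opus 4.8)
The plan is to show that the map induced on labels by an isomorphism of subposets is, quite literally, the accompanying increasing bijection $f$, and that since $f$ preserves the order on $\Nb$ it automatically preserves all the order-theoretic data entering the definition of compatibility.

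First I would unwind what $\delta_{\lambda}^{\omega}$ records for a covering relation in these partition posets. Since $\com$ has a single (symmetric, binary) generator $\mu = e_2$, for an edge $\lambda \prec \omega$ obtained by merging two blocks $A_i$ and $A_j$ into $A_i \cup A_j$, the computation of $\delta_{\lambda}^{\omega}$ in the covering case gives $\delta_{\lambda}^{\omega} = \{\overline{\mu \times (\min A_i,\, \min A_j)}\}$, so that $D_{\lambda}^{\omega} = \{\min A_i,\, \min A_j\}$. By invariance of $\mu$ under $\Sb_2$, this datum is exactly the \emph{unordered} pair of block-minima being merged. In particular the Björner label $\max(\min A_i,\, \min A_j)$ of this edge lies in $D_{\lambda}^{\omega} \subseteq E_1$, so it is an element on which the bijection $f : E_1 \to E_2$ is defined.

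Next, given an isomorphism of subposets $(g,\, f)$, so that $g(\delta_{\lambda}^{\omega}) = \com(f)(\delta_{\lambda}^{\omega})$ on every edge, I would read off that the image edge $g(\lambda) \prec g(\omega)$ merges two blocks whose minima are $f(\min A_i)$ and $f(\min A_j)$; here the basic-set (injectivity) hypothesis guarantees that $\delta$ determines the merged pair unambiguously. Consequently the Björner label of the image edge is $\max\bigl(f(\min A_i),\, f(\min A_j)\bigr)$. Because $f$ is increasing for the usual order on $\Nb$, it commutes with $\max$, so this equals $f\bigl(\max(\min A_i,\, \min A_j)\bigr)$. Writing $\lambda$ for the Björner labelling, we have therefore established that $\lambda(g(e)) = f(\lambda(e))$ for every edge $e$ of $\Pi_1$; that is, the map induced on labels is the restriction of $f$ to the (finite) set of labels occurring in $\Pi_1$.

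Finally, since $f$ is an increasing bijection, its restriction to labels is an order isomorphism onto the labels occurring in $\Pi_2$. It thus carries any strictly increasing tuple of labels to a strictly increasing tuple, any non-increasing tuple to a non-increasing one, and preserves the lexicographic preorder on tuples of labels — which are precisely the three conditions required for compatibility with isomorphisms of subposets. The one step deserving real care is the identification in the first two paragraphs: one must verify that, modulo the symmetric action on $\mu$, the single-element $\tPc$-partition $\delta_{\lambda}^{\omega}$ faithfully encodes the unordered pair of block-minima, and that the compatibility relation $g(\delta_{\lambda}^{\omega}) = \com(f)(\delta_{\lambda}^{\omega})$ genuinely forces the image edge to merge the blocks with minima $f(\min A_i),\, f(\min A_j)$. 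Once this is pinned down, the conclusion reduces to the elementary observation that an increasing bijection commutes with $\max$.
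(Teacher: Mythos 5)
Your proof is correct and follows essentially the same route as the paper's: both reduce compatibility to the observation that the Björner label depends only on the block minima recorded in $\delta_{\lambda}^{\omega}$, and that the increasing bijection $f$ commutes with $\max$, so the induced map on labels is order-preserving. Your version is merely more explicit in unwinding how $g(\delta_{\lambda}^{\omega}) = \com(f)(\delta_{\lambda}^{\omega})$ identifies the merged blocks of the image edge, which the paper leaves implicit.
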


\begin{proof}
The condition on considered isomorphisms of subposets is given on labellings by an increasing bijection $f$. We have therefore $\min(A_i)<\min(A_j)$ implies $\min(f(A_i))<\min(f(A_j))$. It is equivalent to the property:
\begin{equation}
\max(\min(f(A_i)),\min(f(A_j))) = f(\max(\min(A_i), \min( A_j))).
\end{equation}
The labellings depend only on the minimal elements of the poset and any subposet isomorphism preserves the order on minimal elements by definition: 
 the CL-labellings are thus compatible with isomorphisms of subposets. 
\end{proof}

We can then apply Theorem \ref{ThmCLPBW} to get a PBW basis of the commutative operad. Indeed, elements of the PBW basis are obtained as increasing chains for the CL-labelling. They are thus obtained by merging the two parts having the smallest minimal elements.
\begin{cor}
The PBW basis associated to this CL-labelling is the basis of binary left-side combs with entries labelled from left to right by $1$, \ldots, $n$.
\end{cor}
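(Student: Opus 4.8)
The plan is to identify the basis produced by Theorem~\ref{ThmCLPBW} using the normal-form characterisation of Lemma~\ref{formNormale}, together with an explicit description of the unique increasing chains for Björner's labelling. Since $\com^{(d)} = \com(d+1)$ is one-dimensional, the operad $\com$ has a single PBW basis element in each arity $n = d+1$; by the proof of Theorem~\ref{ThmCLPBW} the PBW basis consists of the normal forms, and by Lemma~\ref{formNormale} the normal form in weight $d$ is the unique element of $\Tc(\mu)$ whose representative in $\Nc(\mu)$ is the minimal increasing maximal chain of the partition lattice $\Pi_{\com}^{(d)}$. It therefore suffices to determine this minimal increasing chain and to read off the underlying tree after forgetting the levels.

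First I would pin down the increasing chain by a greedy argument. Along a maximal chain each covering relation merges two blocks $A_i,A_j$ and carries the label $\max(\min(A_i),\min(A_j))$, and an increasing chain is one whose labels strictly increase from bottom to top. Starting from the minimal element $\{1\}\cdots\{n\}$, to make the first label as small as possible one must merge the two blocks with the two smallest minima, namely $\{1\}$ and $\{2\}$, giving the label $2$. Inductively, once the block $\{1,\ldots,k\}$ and the singletons $\{k+1\},\ldots,\{n\}$ have been formed, the two smallest minima are $1$ and $k+1$, so the next merge joins $\{1,\ldots,k\}$ with $\{k+1\}$ and carries the label $k+1$. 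This produces the chain
\[
\{1\}\cdots\{n\} \prec \{1,2\}\{3\}\cdots\{n\} \prec \cdots \prec \{1,\ldots,n-1\}\{n\} \prec \{1,\ldots,n\},
\]
whose labels $2,3,\ldots,n$ are strictly increasing. As the labelling is a CL-labelling, the increasing chain in the interval $[\hat 0,\hat 1]$ is unique and lexicographically minimal, so this sequential merge is exactly the minimal increasing chain singled out by Lemma~\ref{formNormale}.

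Finally I would translate this chain into a tree. Under the identification of maximal chains of $\Pi_{\com}^{(d)}$ with $\Nc(\mu)^{(d)}$ from Remark~\ref{maximalChains}, each covering relation contributes one vertex labelled $\mu$, the $k$-th merge composing the operation already built on $\{1,\ldots,k\}$ with the new leaf $k+1$. Since the block $\{1,\ldots,k\}$ always has the smaller minimum, at every vertex the accumulated subtree sits on the left and the new leaf on the right; forgetting the levels thus yields the binary tree whose root is $\mu$ with right input the leaf $n$ and left input the tree on $\{1,\ldots,n-1\}$, i.e.\ the left comb, and this is a valid shuffle tree with leaves labelled $1,\ldots,n$ from left to right. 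The main point to argue carefully is the greedy step showing that minimising each label forces the sequential merge, hence the left comb; the passage from the increasing chain to the tree is then a direct reading of the correspondence of Remark~\ref{maximalChains}.
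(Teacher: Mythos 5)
Your proposal is correct and follows essentially the same route as the paper, which simply observes that the PBW basis elements correspond to the increasing chains for Björner's labelling and that these are obtained by repeatedly merging the two blocks with the smallest minima, yielding the left combs. Your version just spells out the greedy identification of the increasing chain and the translation into a planar tree in more detail.
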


\section{Study of the converse of the main theorem}

In this section, we present two examples related to the converse 
direction of Theorem \ref{ThmCLPBW}. The second (counter)-example
is the counter-example of the theorem, that is a PBW algebra admitting no PBW basis which could be associated with an isomorphism compatible CL-shelling. To make it more accessible, we first introduce an example on which it is based.

\subsection{First counter-example}

Let us begin with a first example on algebras which proves the complexity of the converse of Theorem \ref{ThmCLPBW}. We consider the algebra in sets on the generators $a$, $b$, $c$, $d$, $e$, $f$,\, \ldots ,\, $l$ with relations $da=eb$, $fb=hc$, $gb=ic$, $kh=li$ and $je=kf=lg$. 
\begin{prop} \label{basic1}
This algebra is basic-set.
\end{prop}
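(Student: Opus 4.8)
The plan is to verify the basic-set condition directly from its definition: for the algebra $A$ presented in sets by generators $a,b,\ldots,l$ and the given relations, I must show that for any fixed $\nu_1$ (the algebra has generators only in arity $1$, so the composition $\gamma(\nu,\nu_1)$ is simply the product $\nu\cdot\nu_1$ in the associative algebra), the map $\nu\mapsto\nu\cdot\nu_1$ on the set-theoretic algebra is injective. Since an associative algebra has generators concentrated in arity $1$, composition is iterated multiplication, so being basic-set amounts to showing that right-multiplication (and by symmetry left-multiplication) by a fixed monomial is injective on the set of normal-form monomials of $A$.

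First I would determine the underlying set of the algebra $A$, that is, a set of normal forms for the monoid presented by the generators modulo the relations $da=eb$, $fb=hc$, $gb=ic$, $kh=li$, $je=kf=lg$. I would orient each relation into a rewriting rule (choosing a compatible monomial order, for instance so that the left-hand side rewrites to the right-hand side) and check local confluence of the resulting rewriting system by examining overlaps between left-hand sides of rules. Because the generators on each side of every relation are distinct single letters sharing no common prefixes or suffixes across distinct rules in a way that creates branching, I expect there to be no nontrivial critical pairs, so the system is confluent and terminating, giving a well-defined set of normal-form words indexing a $k$-basis (equivalently, a set of set-theoretic monomials).

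Next I would establish injectivity of multiplication. The key observation is that in each relation the two sides $uv=u'v'$ involve genuinely distinct generators, so knowing a product $m\cdot w$ (as a normal form) together with the fixed right factor $w$ allows one to recover $m$ uniquely: one reads off the letters of $w$ from the right end of the normal form, undoing any rewrite that the concatenation may have triggered at the junction between $m$ and $w$, and the structure of the relations guarantees this undoing is unambiguous. Concretely I would argue that if $m_1\cdot w$ and $m_2\cdot w$ reduce to the same normal form, then tracing the rewriting backward from the seam forces $m_1=m_2$; the same argument applies to left multiplication by symmetry of the presentation.

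The main obstacle I anticipate is the junction analysis: a product $m\cdot w$ may trigger a cascade of rewrites precisely at the boundary between $m$ and $w$, and I must ensure that these rewrites never merge two distinct inputs $m_1\neq m_2$ into the same output. The delicate cases are the chained relations $je=kf=lg$ and the pairs $fb=hc$, $gb=ic$, where several rules share a common letter ($e$, $f$, $g$, $b$, $c$) and could in principle interact. I would handle this by carefully cataloguing which suffixes of a normal form can be the residue of such a cascade and checking that each such residue determines a unique preimage. Once this case check is complete, injectivity of the composition maps follows, and hence $A$ is basic-set.
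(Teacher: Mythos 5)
Your overall framing is reasonable---for an algebra concentrated in arity $1$ the basic-set condition reduces to injectivity of right multiplication by a fixed element, and since $u\cdot(w_1\cdots w_n)=(\cdots(u w_1)\cdots)w_n$ it suffices to treat multiplication by a single generator, which is essentially how the paper sets up its proof (it assumes $ux=vx$ with $u\neq v$ of minimal length and $x$ a single letter, and derives a contradiction). But the proposal has a genuine gap: the entire mathematical content of the statement is the case analysis showing that the specific relations $da=eb$, $fb=hc$, $gb=ic$, $kh=li$, $je=kf=lg$ never merge two distinct words after right multiplication, and you defer exactly that step (``I would handle this by carefully cataloguing\dots and checking\dots''). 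The paper's proof consists almost entirely of this catalogue: it splits into the cases $u_n\neq v_n$ and $u_n=v_n$, observes that only $b$ and $c$ terminate two distinct relations, and checks that the only candidate words ($kfb$ versus $lgb$, and $khc$ versus $lic$) are already identified by the relations $kf=lg$ and $kh=li$, so no genuine counterexample arises. Without carrying out that verification nothing has been proved.

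There is also a concrete false step: you assert that the rewriting system obtained by orienting the relations has no nontrivial critical pairs. It does---with $gb\to ic$ and $lg\to je$ the word $lgb$ is an overlap (the paper points this out explicitly when discussing the PBW basis of this algebra), and the three-way relation $je=kf=lg$ forces further interactions through $li\to kh$ and $hc\to fb$. That critical pair happens to be confluent, but this must be checked, not assumed. Finally, the appeal to ``symmetry of the presentation'' to obtain injectivity of left multiplication is unfounded (the presentation is not left--right symmetric), though this is harmless since the definition of basic-set only requires the maps $\nu\mapsto\gamma(\nu,\nu_1,\ldots,\nu_k)$, i.e.\ right multiplication in the algebra case, to be injective.
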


\begin{proof}
Suppose that it is not the case and there exists two words $u\neq v$ 
and a letter $x$ such that $ux=vx$. Without loss of generalities,
we can assume that $u$ and $v$ are of minimal length, let us say of 
length $n$ for instance.

\begin{itemize}
\item Let us first assume that $u_n \neq v_n$. We presume that 
$u=u_1\ldots u_n$ and $v=v_1\ldots v_n$. Then $x$ must be the last 
letter of at least one relation, otherwise $ux=vx$ would imply 
$u=v$ which is assume to be false. Moreover, $u_n \neq v_n$ implies 
that $x$ is in fact in at least two relations: it is then $b$ or 
$c$.

The relation interfering here can then be $da=eb$, $fb=hc$ or 
$gb=ic$.
However there are no other relations using $d$ so that its use 
could not lead to a relation $ux=vx$.

Let us try to build $u$ and $v$: if $x=b$, both relations $fb=hc$ 
and $gb=ic$ must be used to avoid $u_n=v_n$. For instance, let us 
say that $u_n=f$ and $v_n=g$. When rewriting $ux$ in $vx$, we 
should then use a relation of the type $*h=\ldots$. Iterating this 
reasoning, the only possible word for $ux$ is $kfb$ and $vx$ is $lgb$.
 However $kf=lg$: we get a contradiction.
 
 If $x=c$, the only possibility is $ux=khc$ and $vx=lic$, but once again $kh=li$.

\item Let us now assume that $u_n=v_n$. We can take $u$ and $v$ such that the first and the last relation involve $x$. Then $u_nx$ must be equal to a $\alpha \beta$, $\alpha$ being the right element of a product belonging to some relations. The possibilities are:
\begin{itemize}
\item $u_nx=fb$ or $hc$: $ux$ is then  $kfb$ or $khc$. It cannot be longer as there are no relations of type $\alpha k$. It is moreover not possible to rewrite it in some $\gamma fb$ or $\gamma hc$, with some $\gamma$ different from $k$.
\item $u_nx=gb$ or $ic$: $ux$ is then  $lgb$ or $lic$. It cannot be longer as there are no relations of type $\alpha l$. It is moreover not possible to rewrite it in some $\gamma gb$ or $\gamma ic$, with some $\gamma$ different from $l$.
\end{itemize}
This concludes the proof.
\end{itemize}
\end{proof}

 Let us choose any total order extending the following partial order on generators $d < e$, $i < g$,
 $f < h$ and $j < k < l$. We consider the following orientation on relations, obtained from the lexicographic order extending the order on generators:
\begin{align*}
eb & \rightarrow da &
hc & \rightarrow fb \\
gb &\rightarrow ic &
kf &\rightarrow je \\
lg &\rightarrow je &
li &\rightarrow kh.
\end{align*}

The only critical pair of the rewriting system is $lgb$, which is confluent. The obtained rewriting system is convergent: the algebra is thus PBW \cite[Section 3]{GHM}. Let us consider the partition poset $\Pi$ drawn on Figure \ref{PPoset}, which is an interval of one partition poset associated with the algebra. The edges are labelled by the generators on which the upper element must be multiplied to obtain the lower element.

\begin{center}
\begin{figure}
\begin{tikzpicture} 
\node[draw, circle] (A) at (0,0) {A};
\node[draw, circle] (B) at (-2,2) {B};
\node[draw, circle] (S) at (0,2) {S};
\node[draw, circle] (U) at (2,2) {U};
\node[draw, circle] (V) at (-2,4) {V};
\node[draw, circle] (W) at (0,4) {W};
\node[draw, circle] (X) at (2,4) {X};
\node[draw, circle] (Z) at (0,6) {Z};

\draw (A.north west)edge node[fill=white]{$a$}(B.south);
\draw (A.north)edge node[fill=white]{$b$}(S.south);
\draw (A.north east)edge node[fill=white]{$c$}(U.south);

\draw (B.north)edge node[fill=white, near start]{$d$}(V.south);
\draw (S.north)edge node[fill=white, near start]{$f$}(W.south);
\draw (S.north west)edge node[fill=white, near end]{$e$}(V.south east);
\draw (S.north east)edge node[fill=white, near end]{$g$}(X.south west);
\draw (U.north west)edge node[fill=white, near start]{$h$}(W.south east);
\draw (U.north)edge node[fill=white, near end]{$i$}(X.south);
\draw (V.north)edge node[fill=white, near start]{$j$}(Z.south west);
\draw (W.north)edge node[fill=white, near start]{$k$}(Z.south);
\draw (X.north)edge node[fill=white, near start]{$l$}(Z.south east);
\end{tikzpicture}
\caption{Partition poset associated to the algebra \label{PPoset}}
\end{figure}
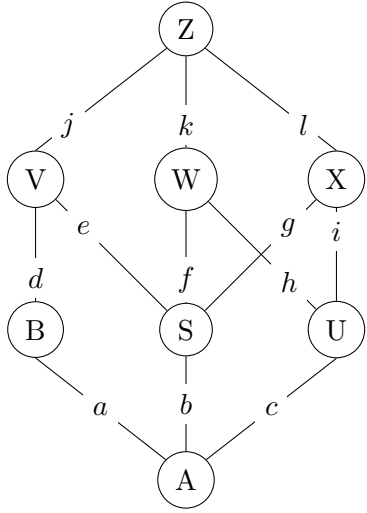
\end{center}

\begin{lem}
The poset $\Pi$ admits no CL-labelling such that the poset is CL-shellable and minimal chains associated with the CL-labelling correspond to PBW elements of the algebra, i.e. to normal forms in the previous rewriting system. 
\end{lem}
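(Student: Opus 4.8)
The plan is to argue by contradiction: suppose $\Pi$ carries a CL-labelling $\lambda \colon \ME(\Pi) \to \Lambda$ making it CL-shellable and whose unique increasing (hence lexicographically least) maximal chain in each interval represents a normal form for the rewriting system. First I would locate the two intervals that carry the obstruction, namely $[A,W]$ and $[A,X]$. The interval $[A,W]$ has exactly the two maximal chains $A \prec S \prec W$ and $A \prec U \prec W$, which read as the words $fb$ and $hc$; since $W$ is a single element of the poset we have $fb=hc$ in the algebra, and as $hc \to fb$ the normal form is $fb$. Thus the increasing chain of $[A,W]$ is forced to pass through $S$. Symmetrically, $[A,X]$ has the maximal chains $A \prec S \prec X$ and $A \prec U \prec X$ reading as $gb$ and $ic$, with $gb=ic$ and $gb \to ic$, so here the increasing chain is forced to pass through $U$.

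The second step is to observe that the two edges $A \prec S$ and $A \prec U$ occur in both intervals and carry labels independent of how the chain is completed: by the defining property of a chain-edge labelling, two maximal chains sharing their bottom edge share its label. Writing $\beta \coloneqq \lambda(A \prec S)$ and $\gamma \coloneqq \lambda(A \prec U)$, the same $\beta$ and $\gamma$ therefore appear as the first labels in $[A,W]$ and in $[A,X]$.

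The heart of the argument is a comparison of first labels dictated by lexicographic minimality. In $[A,W]$ the increasing chain runs through $S$, so its label tuple $(\beta,\lambda(S \prec W))$ must lexicographically precede $(\gamma,\lambda(U \prec W))$; comparing first coordinates excludes both $\gamma < \beta$ and the incomparability of $\beta$ and $\gamma$, leaving $\beta < \gamma$ or $\beta=\gamma$. The same reasoning in $[A,X]$, where the increasing chain runs through $U$, yields $\gamma < \beta$ or $\gamma=\beta$. The two conclusions are jointly possible only when $\beta=\gamma$. To rule out this last case, set $\sigma \coloneqq \lambda(S \prec W)$ and $\tau \coloneqq \lambda(U \prec W)$: since $A \prec S \prec W$ is increasing we get $\beta < \sigma$; since $A \prec U \prec W$ is not increasing (uniqueness of the increasing chain) and $\gamma=\beta$, we get $\beta \not< \tau$; and since the two first labels now coincide, lexicographic minimality breaks the tie in the second coordinate, giving $\sigma < \tau$. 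Transitivity in $\Lambda$ then forces $\beta < \sigma < \tau$, hence $\beta < \tau$, contradicting $\beta \not< \tau$. Every case being impossible, no such CL-labelling can exist.

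I expect the delicate point to be exactly the case $\beta=\gamma$: the comparison of first coordinates alone does not close the argument, and one must combine the uniqueness of the increasing chain with the second-coordinate tie-break forced by lexicographic minimality. The structural reason the statement holds is that the edges $A \prec S$ and $A \prec U$ are shared by $[A,W]$ and $[A,X]$, so their labels are pinned down once and for all, while the normal forms demand the opposite relative order of these fixed labels in the two intervals.
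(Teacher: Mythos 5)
Your argument is correct and takes essentially the same route as the paper's: both proofs extract from the interval $[A,W]$ that the label of $A\prec S$ must lexicographically beat that of $A\prec U$, from $[A,X]$ the reverse, and use the chain-edge labelling axiom to see that these bottom-edge labels are pinned down independently of the interval, yielding a contradiction. Your explicit elimination of the tie case $\beta=\gamma$ via the second coordinate and transitivity is a small refinement that the paper leaves implicit in its direct assertion of the strict inequalities, but the substance is identical.
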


\begin{proof}
We prove the lemma by reductio ad absurdum. Suppose that a CL-labelling $l$ of $\Pi$ exists, such that the poset $\Pi$ is CL-shellable and minimal chains in the lexicographic order for the CL-labelling correspond to PBW elements of the algebra. We denote by $\lessdot$ the covering relation associated with the poset $\Pi$. Then in the interval $[A;W]$, the orientation of the rewriting system implies that the chain $A \lessdot S \lessdot W$ must be the smallest one and increasing, thus implies $l(A \lessdot S)<l(A \lessdot U)$. Moreover, in the interval $[A;X]$, the orientation of the rewriting system implies that the chain $A \lessdot U \lessdot X$ must be the smaller one and increasing, thus implies $l(A \lessdot U)<l(A \lessdot S)$. Remark that we do not need to specify to which maximal chains these edges belong as if two maximal chains coincide along their bottom edge, then the labels also coincide on this edge. We thus get a contradiction as we cannot have $l(A \lessdot S)<l(A \lessdot U)<l(A \lessdot S)$: such a CL-labelling cannot exist. 
\end{proof}

Considering an algebra $\mathcal{A}$, and an associated rewriting system $R(\mathcal{A})$. For the same reasons as detailed in the previous proof, every poset associated with $\mathcal{A}$ containing the subposet in Figure \ref{fig5}, with $\beta\alpha $ and $\beta' \alpha'$ normal forms of $R(\mathcal{A})$, cannot admit a CL-labelling such that the poset is CL-shellable and minimal chains in the lexicographic order for the CL-labelling correspond to normal forms of $R(\mathcal{A})$.
\begin{figure}
\begin{tikzpicture}
\coordinate (A) at (-1,0);
\draw (A) node{$\bullet$};
\coordinate (B) at (-2,2);
\draw (B) node{$\bullet$};
\coordinate (S) at (0,2);
\draw (S) node{$\bullet$};
\coordinate (V) at (-2,4);
\draw (V) node{$\bullet$};
\coordinate (W) at (0,4);
\draw (W) node{$\bullet$};

\draw (A.north west)edge node[fill=white]{$\alpha$}(B.south);
\draw (A.north east)edge node[fill=white]{$\alpha'$}(S.south);

\draw (B.north)edge node[fill=white, near start]{$\beta$}(V.south);
\draw (B.north east)edge node[fill=white, near start]{$\gamma$}(W.south west);
\draw (S.north)edge node[fill=white, near start]{$\beta'$}(W.south);
\draw (S.north west)edge node[fill=white, near end]{$\gamma'$}(V.south east);
\end{tikzpicture}
\caption{Poset in which the PBW elements $\beta\alpha $ and $\beta' \alpha'$ cannot correspond to minimal chains of any CL-labelling} \label{fig5}
\end{figure}
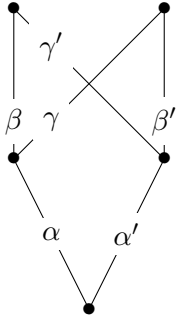

\begin{rque}
In the previous counter-example, exchanging the order between $i$ and $g$ enables to define a CL-shelling satisfying the wanted property.
\end{rque}

\subsection{Second counter-example}

Using the idea of the first example, we construct a counter-example for the converse of Theorem \ref{ThmCLPBW}, i.e. given any PBW basis associated with the algebra, there is no associated isomorphism compatible CL-shelling.

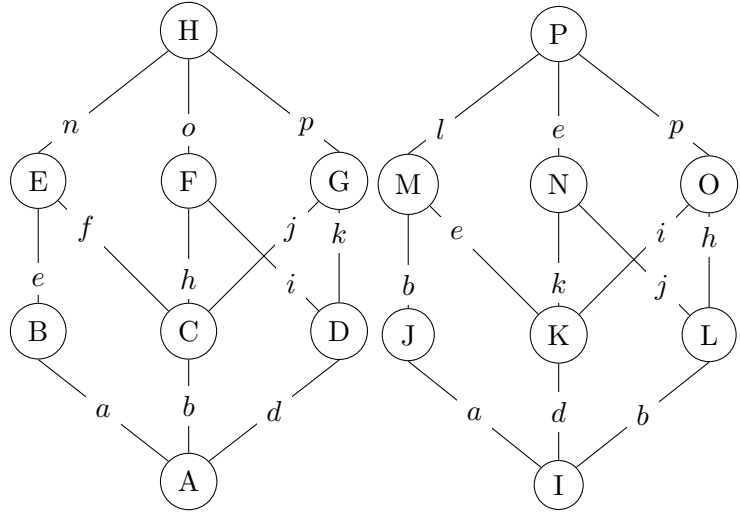
\begin{figure}
\begin{tikzpicture} 
\node[draw, circle] (A) at (0,0) {A};
\node[draw, circle] (B) at (-2,2) {B};
\node[draw, circle] (S) at (0,2) {C};
\node[draw, circle] (U) at (2,2) {D};
\node[draw, circle] (V) at (-2,4) {E};
\node[draw, circle] (W) at (0,4) {F};
\node[draw, circle] (X) at (2,4) {G};
\node[draw, circle] (Z) at (0,6) {H};

\draw (A.north west)edge node[fill=white]{$a$}(B.south);
\draw (A.north)edge node[fill=white]{$b$}(S.south);
\draw (A.north east)edge node[fill=white]{$d$}(U.south);

\draw (B.north)edge node[fill=white, near start]{$e$}(V.south);
\draw (S.north)edge node[fill=white, near start]{$h$}(W.south);
\draw (S.north west)edge node[fill=white, near end]{$f$}(V.south east);
\draw (S.north east)edge node[fill=white, near end]{$j$}(X.south west);
\draw (U.north west)edge node[fill=white, near start]{$i$}(W.south east);
\draw (U.north)edge node[fill=white, near end]{$k$}(X.south);
\draw (V.north)edge node[fill=white, near start]{$n$}(Z.south west);
\draw (W.north)edge node[fill=white, near start]{$o$}(Z.south);
\draw (X.north)edge node[fill=white, near start]{$p$}(Z.south east);
\end{tikzpicture}
\begin{tikzpicture} 
\node[draw, circle] (A) at (0,0) {I};
\node[draw, circle] (B) at (-2,2) {J};
\node[draw, circle] (S) at (0,2) {K};
\node[draw, circle] (U) at (2,2) {L};
\node[draw, circle] (V) at (-2,4) {M};	
\node[draw, circle] (W) at (0,4) {N};
\node[draw, circle] (X) at (2,4) {O};
\node[draw, circle] (Z) at (0,6) {P};

\draw (A.north west)edge node[fill=white]{$a$}(B.south);
\draw (A.north)edge node[fill=white]{$d$}(S.south);
\draw (A.north east)edge node[fill=white]{$b$}(U.south);

\draw (B.north)edge node[fill=white, near start]{$b$}(V.south);
\draw (S.north)edge node[fill=white, near start]{$k$}(W.south);
\draw (S.north west)edge node[fill=white, near end]{$e$}(V.south east);
\draw (S.north east)edge node[fill=white, near end]{$i$}(X.south west);
\draw (U.north west)edge node[fill=white, near start]{$j$}(W.south east);
\draw (U.north)edge node[fill=white, near end]{$h$}(X.south);
\draw (V.north)edge node[fill=white, near start]{$l$}(Z.south west);
\draw (W.north)edge node[fill=white, near start]{$e$}(Z.south);
\draw (X.north)edge node[fill=white, near start]{$p$}(Z.south east);
\end{tikzpicture}
\caption{Second counter-example}
\end{figure}

We consider the algebra on $13$ generators: $a$, $b$, $d$, $e$, $f$,
 $h$, $i$, $j$, $k$, $l$, $n$, $o$, $p$, with relations given as follows.

\begin{align*}
&ba=ed  &ea=fb   \\
&hb=id  &jb=kd  \\
&oi=pk  &ej=ph \\
&le=ek=pi  &nf=oh=pj 
\end{align*}

Once again, we have the following proposition:
\begin{prop}
This algebra is basic-set
\end{prop}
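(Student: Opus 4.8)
The plan is to replicate the minimal-counterexample argument used for Proposition~\ref{basic1}. Since every relation equates two monomials of the same degree, the algebra is the monoid algebra $k[M]$ of the monoid $M$ presented by the thirteen generators and the displayed relations, and equal elements have the same length. Being basic-set amounts to asking that right multiplication by each generator be injective on $M$; that is, that $ux = vx$ in $M$ forces $u = v$ for every generator $x$ (the case of an arbitrary right factor follows by factoring it into generators and composing). So I would assume the contrary and pick words $u \neq v$ and a generator $x$ with $ux = vx$ and with $|u| = |v| = n$ minimal.

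The first thing I would record is the list of length-two relation classes, namely $\{ba,ed\}$, $\{ea,fb\}$, $\{hb,id\}$, $\{jb,kd\}$, $\{oi,pk\}$, $\{ej,ph\}$, $\{le,ek,pi\}$ and $\{nf,oh,pj\}$, together with the observation that within each class the monomials have pairwise distinct last letters. Consequently two monomials $u_n x$ and $v_n x$ ending in the same letter $x$ are never relation-equal unless $u_n = v_n$, so in the case $u_n \neq v_n$ the chain of rewrites linking $ux$ to $vx$ must act across the final two letters and thereby temporarily change the last letter. I would then push this discrepancy leftwards exactly as in the proof of Proposition~\ref{basic1}, using the crucial fact that the generators $l$, $n$, $o$ and $p$ never occur as the right-hand letter of a relation: once such a generator is produced it is frozen at the left end and the word cannot be rewritten any further to its left. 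This bounds the leftward growth and traps the discrepancy against one of the relations.

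The heart of the argument is that, when the discrepancy is finally cornered, it meets one of the multiplicity relations $le = ek = pi$, $nf = oh = pj$, or one of $ej = ph$, $oi = pk$, $hb = id$, $jb = kd$, $ea = fb$, $ba = ed$. In each configuration the two competing length-two prefixes $\sigma x$ and $\tau x$ with $\sigma \neq \tau$ that get produced are precisely the pairs declared equal by these relations (for instance $nf \cdot b$ against $oh \cdot b$ resolves through $nf = oh$, and $le \cdot d$ against $ek \cdot d$ through $le = ek$). Thus $\sigma$ and $\tau$ are already equal in $M$, producing a shorter coincidence $u' x' = v' x'$ and contradicting the minimality of $n$, or else directly $u = v$. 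Splitting off the remaining case $u_n = v_n$ is handled as in Proposition~\ref{basic1}: one arranges that the first and last relations involve $x$, enumerates the few possibilities for $u_n x$, and uses the absence of relations of the form $\alpha l$, $\alpha n$, $\alpha o$, $\alpha p$ to conclude that the word cannot be extended, pinning it down and forcing $u = v$.

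The step I expect to be the main obstacle is controlling the generator $e$. Unlike the first counter-example, here $e$ appears both as a middle letter and, in the second poset, as an outermost (leftmost) letter of a length-three coincidence via $N \xrightarrow{e} P$, so $e$ is not frozen the way $l$, $n$, $o$, $p$ are. I would therefore have to check separately that every rewrite reintroducing $e$ at the left still closes up through $le = ek = pi$ and $ea = fb$ rather than opening a new, genuinely length-increasing branch; the homogeneity of the relations and the minimality of $n$ are what ultimately prevent such a branch from escaping.
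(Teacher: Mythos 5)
Your overall strategy is the right one and is the one the paper itself announces (it explicitly ``follows the strategy of the proof of Proposition~\ref{basic1}''): reduce to right multiplication by a single generator $x$, take a minimal coincidence $ux=vx$ with $u\neq v$, note that each relation class has pairwise distinct last letters so the discrepancy must propagate leftwards, and use that $l$, $n$, $o$, $p$ never occur as the second letter of a relation to stop that propagation. Your preliminary observations (the list of relation classes, the distinct-last-letter property, the frozen letters) are all correct.

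The gap is that the entire content of the proof is the finite verification you defer, and the difficulty you flag with $e$ is not a side issue but the main one; your sketch as stated would not survive it. Because $e$ can be reintroduced on the left (via $le=ek=pi$) and also occurs in the middle of relation words ($ed$, $ea$, $ej$, $ek$), the ``saturated'' words one must examine are not bounded by one frozen step: the paper's enumeration contains words of length up to five ending in $d$ (e.g.\ $lejed$, whose class also contains $ekjed$, $pijed$ and $lphed$). In such cases the competing prefixes before the final $x$ have length three or four, not two ($ohe\cdot d$ vs.\ $nfe\cdot d$ vs.\ $pje\cdot d$; $leje\cdot d$ vs.\ $lphe\cdot d$), so your claim that ``the two competing length-two prefixes $\sigma x$ and $\tau x$ \ldots are precisely the pairs declared equal by these relations'' is literally false as stated, and the reduction to ``a shorter coincidence contradicting minimality'' is not set up for these longer prefixes. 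One still concludes correctly in each case (e.g.\ $leje=lphe$ because $ej=ph$, and $ohe=nfe=pje$ because $oh=nf=pj$), but only after actually listing the finitely many maximal words ending in each admissible $x$ --- which is exactly what the paper's proof does (its lists for $a$, $b$, $d$, $e$, $f$, $h$, $i$, $j$, $k$, including $lle$ and $lejed$) and what your proposal replaces by an appeal to ``homogeneity and minimality'' that does not by itself close the argument.
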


\begin{proof} We follow the strategy of the proof of Proposition \ref{basic1}.

The possible $x$ for a relation $ux=vx$ are $a$, $b$, $d$, $e$, $f$,
$h$, $i$, $j$ and $k$. The possible associated words are:
\begin{itemize}
\item ending in $a$: $lba$ and $nea$
\item ending in $b$: $lfb$, $pjb$, $ejb$, $ohb$ and $phb$
\item ending in $d$: $ohed$, $phed$, $oid$, $pid$, $pkd$, $ekd$, 
$lfed$, $pjed$ and $lejed$
\item ending in $e$: $lle$
\item ending in $f$: $nf$
\item ending in $h$: $lph$ and $oh$
\item ending in $i$: $oi$ and $lpi$
\item ending in $j$: $ej$ and $pj$
\item ending in $k$: $pk$ and $ek$.
\end{itemize} 
These words are of maximal lengths (no letter can be added, which 
could be involved in a relation) and none of these words can lead 
to a relation $ux=vx$, with $u \neq v$.
\end{proof}

This algebra admits at least one PBW basis by considering the following rewriting system, associated with any linear order extending $b < l 
<  e < n < o < p < f$, $h < i$, $k < j$:
\begin{align*}
ed&\rightarrow be &
ek&\rightarrow le \\
fb &\rightarrow ea &
id&\rightarrow hb \\
jb&\rightarrow kd &
oh&\rightarrow nf \\
ph&\rightarrow ej &
pi&\rightarrow le \\
pj&\rightarrow nf &
pk&\rightarrow oi \\
\end{align*}

The lexicographic ordering ensures that the rewriting system is terminating and the only critical pairs are $pid$ and $pjb$ which are confluent: this system is convergent and define a PBW basis as defined in definition \ref{defPBW}.

\begin{lem}
The posets associated with this algebra admit no CL-labelling such that the posets are CL-shellable, with a shelling compatible with poset isomorphisms and minimal chains for the CL-labelling corresponding to PBW elements of the algebra, for any kind of PBW basis. Equivalently, the converse of Theorem \ref{ThmCLPBW} is false.
\end{lem}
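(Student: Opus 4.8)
The plan is to argue by contradiction, reusing the obstruction of the first counter-example but arranging that it cannot be circumvented by reordering the generators. Assume the posets associated with the algebra carry a family of CL-labellings $\lambda$ which is CL-shellable, compatible with isomorphisms of subposets, and whose minimal chains are the normal forms of some fixed PBW basis. Two facts drive the argument. First, exactly as in Figure \ref{fig5} and the first counter-example, if one of these posets contains a \emph{bowtie} $\{x,y\}\times\{v,w\}$ (a $K_{2,2}$ of covering relations, rooted at a common vertex $z$ below) whose two \emph{crossing} maximal chains are both normal forms, then lexicographic minimality of the increasing chain forces $\lambda(z\prec x)<\lambda(z\prec y)$ from the interval $[z,v]$ and $\lambda(z\prec y)<\lambda(z\prec x)$ from the interval $[z,w]$, a contradiction; hence no crossing bowtie of normal forms may occur. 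Second, as already observed just before Lemma \ref{lem: increasing chain}, compatibility with isomorphisms of subposets makes the increasing/non-increasing status of a quadratic relation depend only on the relation itself; thus $\lambda$ determines a global orientation of every relation, and since minimal chains are normal forms this orientation must coincide with the chosen PBW one.

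I would then reduce the statement to a finite verification over the orientations of the relations that are compatible with confluence. The essential design feature is that $hb=id$ and $jb=kd$ are the only two relations comparing the generators $b$ and $d$, and that they occur in \emph{both} displayed gadgets (as the bowtie $\{C,D\}\times\{F,G\}$ and as $\{K,L\}\times\{N,O\}$); by the global orientation principle above they must therefore be oriented identically in both. One first analyses, relation by relation, the constraints imposed by confluence of the critical pairs: the overlaps $phb$ and $pjb$, together with the overlaps produced by the ternary relations $le=ek=pi$ and $nf=oh=pj$ and by $ba=ed$ and $ea=fb$, couple the admissible orientations of these relations to the common orientation of $hb=id$ and $jb=kd$. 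The example is built so that these couplings are rigid, which keeps the case analysis finite.

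Finally, I would run this confluence-constrained case analysis, exhibiting in each case a crossing bowtie of normal forms and invoking the first fact. The delicate point, and the main obstacle I expect, is \emph{unavoidability}: when $hb=id$ and $jb=kd$ point in the ``same direction'' the unique $\{b,d\}$-bowtie is non-crossing in each gadget, so the contradiction cannot be read off the two displayed intervals alone. For those orientations the obstruction must be located in a larger poset of the algebra, where the two gadgets are glued along their common relations $hb=id$, $jb=kd$: the relations $ba=ed$, $ea=fb$ on one side and the two ternary relations on the other force the generators $b$ and $d$, relative to $a$, into two incompatible orders, which materialises as a crossing bowtie one weight higher. Verifying that this occurs for every admissible orientation is the crux, and it requires controlling the induced strict inequalities on the CL-labels across several intervals simultaneously rather than in a single bowtie. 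Once a crossing bowtie of normal forms is produced in every case, the first fact yields the contradiction; and since the algebra admits a convergent rewriting system (hence a PBW basis), this shows that the converse of Theorem \ref{ThmCLPBW} is false.
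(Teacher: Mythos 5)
Your general framework and your handling of the mixed orientations agree with the paper's proof: isomorphism compatibility identifies the diamonds $[I;N]\cong[A;G]$ and $[I;O]\cong[A;F]$, so the bottom-edge labels $l_b$ and $l_d$ are forced consistently across the two gadgets, and when $hb=id$ and $jb=kd$ are oriented in opposite directions the two diamonds above $A$ (your ``crossing bowtie'') yield $l_b<l_d$ and $l_d<l_b$ simultaneously. Up to that point the two arguments coincide.

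The genuine gap is the one you flag yourself: the two \emph{same-direction} orientations, which are exactly the cases that defeat the reordering fix available in the first counter-example, are left as a plan rather than a proof. Your proposed mechanism --- a crossing bowtie materialising in a larger poset obtained by gluing the gadgets --- is never constructed, and it is not how the paper concludes. The paper instead works inside the two displayed weight-$3$ intervals: assuming $hb\to id$ and $jb\to kd$, condition (2) of Definition \ref{defPBW} shows that one of $oid$, $pkd$ must be a PBW element representing the top of $[A;H]$ (exactly one of $oi$, $pk$ lies in the basis while $id$ and $kd$ both do), so by uniqueness of the representative $nea$ is excluded, which forces $ea\to fb$; the analogous analysis of $[I;P]$ forces $ba\to ed$. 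With these forced orientations the critical pair $jba$ rewrites to the two distinct normal forms $kda$ and $jed$, so \emph{no} PBW basis realises this orientation at all --- the contradiction here is with the basis property itself, before any CL-label inequality enters. Thus in half of the cases the obstruction is not a crossing bowtie, and your proposal supplies no argument that closes them; to complete your route you would either have to exhibit and analyse the glued poset you allude to, or replace that step by a forced-orientation-plus-nonconfluence argument of the above kind.
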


\begin{proof}
Let us consider the equations $hb=id$ and $jb=kd$. There are four possible orientations. We will prove that there are no orientation which gives a PBW basis associated with minimal chains for a CL-shelling. Note that the intervals $[I;N]$ and $[A;G]$ (resp. $[I;O]$ and $[A;F]$) are isomorphic, thus the labellings $l_b$ (resp. $l_d$) of edges $[I;L]$ and $[A;C]$ (resp. $[I;K]$ and $[A;D]$) must be the same in any interval beginning with it.
\begin{itemize}
\item If $hb \rightarrow id$ and $kd \rightarrow jb$ or $id \rightarrow hb$ and $jb \rightarrow kd$, one of the posets implies $l_b<l_d$ and the other one $l_d<l_b$, which is impossible.
\item If $hb\rightarrow id$ and $jb \rightarrow kd$, either $oid$ or $pkd$ is the normal form of the poset $[A;H]$.
This implies $ea \rightarrow fb$ so that there is only one normal form in $[A;H]$. 
The same reasoning in $[I;P]$ implies $ba \rightarrow ed$. But then the critical pair $jba$ is not confluent: no PBW basis can satisfy that.
The case $di \rightarrow bh$ and $dk \rightarrow bj$ is symmetric.
\end{itemize}
\end{proof}

\bibliographystyle{alpha}
\bibliography{bibli}

\end{document}